\documentclass[12pt,a4paper,oneside,final,reqno]{amsart}

\usepackage{amsthm}
\usepackage{amsmath}
\usepackage{amssymb}
\usepackage{mathtools}
\usepackage{amscd}
\usepackage[utf8]{inputenc}
\usepackage{typearea}
\usepackage{eufrak}
\usepackage{yfonts}
\usepackage{textcomp}
\usepackage{mathrsfs}
\usepackage{hyperref}
\usepackage[draft]{fixme} 
\usepackage{pdfsync}
\usepackage[active]{srcltx}
\usepackage{xypic}
\usepackage{verbatim}
\hypersetup{hidelinks}

\usepackage{todonotes}
\usepackage[capitalise]{cleveref}
\usepackage{tikz-cd}
\usepackage{tensor}

\usepackage[backend=bibtex, style=alphabetic, maxnames=50]{biblatex}

\newcommand{\C}{\mathbb{C}}

\newcommand{\N}{\mathbb{N}}
\newcommand{\R}{\mathbb{R}}

\def\acts{\curvearrowright}

\theoremstyle{plain}
\newtheorem{theorem}{Theorem}[section]
\newtheorem{corollary}[theorem]{Corollary}
\newtheorem{lemma}[theorem]{Lemma}
\newtheorem{proposition}[theorem]{Proposition}
\newtheorem{remark}[theorem]{Remark}

\newtheorem{introtheorem}{Theorem}

\theoremstyle{definition}
\newtheorem{definition}[theorem]{Definition}

\title[CQMS from length functions on hyperbolic groups]{Compact Quantum Metric Spaces from Weakly Geodesic Length Functions on Hyperbolic Groups}

 \author[1]{Are Austad}
 \address{Are Austad, Department of Mathematics, University of Oslo, P.O.Box 1053 Blindern, 0316 Oslo, Norway}
 \email{areaus@math.uio.no}

\keywords{Hyperbolic groups, length functions, compact quantum metric spaces, random walks, noncommutative geometry}
\subjclass[2020]{Primary: 58B34; Secondary: 20F67}
\begin{document}

\maketitle

\begin{abstract}
    We show that length functions on hyperbolic groups whose induced metrics are hyperbolic and weakly geodesic naturally give rise to compact quantum metric spaces, thus generalizing the result of Ozawa and Rieffel. As a result we obtain several new examples of compact quantum metric spaces, in particular arising from Green metrics associated with random walks on hyperbolic groups, and from proper cocompact isometric  actions on proper geodesic hyperbolic spaces admitting a point with trivial stabilizer. 
\end{abstract}

\section{Introduction}\label{sec:introduction}
The theory of compact quantum metric spaces, initiated by Rieffel \cite{RieffelMetricsOnStateSpaces1999, RieffelCQMS04, Rieffel2004qGH, Rieffel2004MatrixAlgs, RieffelMartricialBridges2016}, provides a noncommutative analogue of the theory of compact metric spaces. One of the principal sources of examples comes from spectral triples in the sense of Connes \cite{Connes1989, ConnesNCGBook}. Indeed, if $(\mathcal{A},H,D)$ is a unital spectral triple, the commutator with the Dirac operator defines a seminorm
\begin{align*}
    L_D(a):=\Vert [D,a]\Vert ,\quad a\in\mathcal A,
\end{align*}
and one may ask whether the corresponding Monge--Kantorovič metric metrizes the weak$^*$ topology on the state space. When this is the case, the pair $(\mathcal{A},L_D)$ is said to be a compact quantum metric space.

Determining whether a given spectral triple yields a compact quantum metric space is typically a non-trivial problem. It has been studied in the contexts of $C^*$-algebras arising from groups \cite{Rieffel02, OzawaRieffel2005, ChristRieffel2017, ChristensenIvanRD, FarsiLandryLarsenPacker}, compact quantum groups \cite{BhowmickVoigtZacharias2015, AguilarKaad2018, KaadKyedSU2, AustadKyed2026}, crossed products \cite{BellissardMarcolliReihani2010, HawkinsSkalskiWhiteZacharias2013, KlisseCQMS, AustadKaadKyed2025, GerontogiannisMesland25}, and groupoids \cite{AustadGpdCQMS}. Of particular relevance here are compact quantum metric spaces arising from length functions on discrete groups. Suppose $G$ is a countable discrete group equipped with a proper length function $\ell \colon G \to [0,\infty)$. The length function gives rise to a self-adjoint unbounded operator $D$ which on the canonical basis $(\delta_g)_{g\in G}$ is given by
\begin{align*}
    D \delta_g = \ell(g) \delta_g, \quad g\in G.
\end{align*}
This operator interacts nicely with the left regular representation $\lambda$, in that $[D, \lambda(f)]$ extends to an element of $B(\ell^2(G))$ for every $f \in C_c(G)$. We denote the extension by $[D , \lambda(f)]$ as well, and we obtain a seminorm on $C_c(G)$ given by
\begin{align}\label{eq:def-L-intro}
    L(f) := \Vert [D, \lambda(f)]\Vert .
\end{align}
A natural question is therefore:
\begin{center}
    \emph{Under which assumptions on $G$ and $\ell$ is the pair $(C_c(G), L)$ a compact quantum metric space?}
\end{center}
There are several positive results in the literature. Ozawa and Rieffel showed in \cite{OzawaRieffel2005} that one gets a compact quantum metric space this way whenever $G$ is word hyperbolic and $\ell$ is the word length function. Christ and Rieffel \cite{ChristRieffel2017} moreover found that such a quantum structure is always present whenever the pair $(G,\ell)$ has bounded doubling. In particular, this includes the case of $G$ a finitely generated virtually nilpotent group and $\ell$ the word length function by Gromov's theorem on polynomial growth. Interestingly, both of these classes belong to the class of metric groups which have rapid decay. \cite{ChristensenIvanRD} showed that for a pair $(G,\ell)$ with rapid decay, there exists an integer $k = k_{(G,\ell)}$ such that if $L$ is defined analogously to \eqref{eq:def-L-intro} but instead using $k$ iterated commutators, then $(C_c(G), L)$ is indeed a compact quantum metric space. It remains an open problem whether one may always choose $k=1$ for finitely generated pairs $(G,\ell)$ with rapid decay. Recently, Klisse exhibited the first example of a finitely generated group with a word length function for which $(C_c(G), L)$ is not a compact quantum metric space \cite{Klisse26}. This example lies outside the rapid decay setting.  The corresponding question for groups with rapid decay remains open.

Also recently, the author has tied the question to an approximation criterion in terms of existence of finitely supported Fourier multipliers, allowing for a new approach to the problem for arbitrary proper length functions \cite{AustadGpdCQMS}. 
We record the relevant result from \cite{AustadGpdCQMS} as
Proposition \ref{prop:CQMS-equiv-weak-amenable-gps}. 

The aim of this article is to revisit Ozawa and Rieffel's result and loosen the previous restriction to word length functions in order to accommodate other interesting length functions on hyperbolic groups. Such length functions arise, for example, from random walks on the group, or from proper cocompact isometric actions on hyperbolic spaces with a point having trivial stabilizer. Three key properties of word length functions on hyperbolic groups are used in \cite{OzawaRieffel2005}: They are proper, hyperbolic, and geodesic. The main contribution of this paper is to show that geodesicity can be relaxed to weak geodesicity in the sense of Kasparov and Skandalis \cite{KasparovSkandalis2003}. In particular the length functions need no longer be integer-valued. 
Note that any length function $\ell \colon G \to [0,\infty)$ induces a left-invariant metric $d_\ell$ on $G$ given by $d_\ell(g, h) = \ell(g^{-1}h)$ for $g,h\in G$. 
Our main result is the following. 

\begin{introtheorem}\label{introthm:main-theorem}
    Let $G$ be a finitely generated hyperbolic group, and suppose $\ell\colon G \to [0,\infty)$ is a proper length function for which $(G,d_\ell)$ is a hyperbolic and weakly geodesic metric space. Then the seminorm
    \begin{align*}
        L(f) = \Vert [D, \lambda(f)]\Vert , \quad f\in C_c(G),
    \end{align*}
    makes $(C_c(G), L)$ a compact quantum metric space. 
\end{introtheorem}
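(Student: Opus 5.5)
The plan is to follow the strategy of Ozawa and Rieffel. Rieffel's criterion for length functions on discrete groups reduces the statement to two facts. The first is a Haagerup-type (rapid decay) inequality for $\ell$. The second is a "bounded cone" estimate: for each $g\in G$ one needs a decomposition of $G$ into finitely many pieces, controlled uniformly, such that the function $h\mapsto \ell(gh)-\ell(h)$ is almost constant along each piece. Concretely, following \cite{OzawaRieffel2005}, one shows that there is a constant $K$ and, for each finitely supported $f$, finitely many "directions" such that $[D,\lambda(f)]$ nearly controls $\lambda(f)$ modulo the scalars. Equivalently, $L(f)\le 1$ implies that $f$ lies in a set which is totally bounded in $C^*_r(G)$ modulo $\C 1$.

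Alternatively, I would route the argument through Proposition \ref{prop:CQMS-equiv-weak-amenable-gps}. This requires producing finitely supported Fourier multipliers $\phi_n$ with the following properties:
\begin{enumerate}
\item $\phi_n\to 1$ pointwise;
\item the cb-norms of the $\phi_n$ are uniformly bounded;
\item the multipliers are compatible with $D$, meaning the commutator estimate $\Vert [D, M_{\phi_n}(x)]\Vert \le C\Vert [D,x]\Vert$ holds;
\item $\Vert x - M_{\phi_n}(x)\Vert \to 0$ uniformly on $\{L\le 1\}$.
\end{enumerate}

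The key steps, in order, are as follows.

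\textbf{Step 1.} Establish that $d_\ell$ is quasi-isometric to a word metric. Since $\ell$ is proper, weakly geodesic and $G$ is finitely generated, this holds via the Švarc–Milnor type argument for weakly geodesic metrics. Consequently $(G,\ell)$ inherits rapid decay (de la Harpe / Jolissaint) with respect to $\ell$, and balls have exponential growth.

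\textbf{Step 2.} Replace geodesics by weak geodesics in the key geometric lemma of \cite{OzawaRieffel2005}. In that setting, for $x$ far from $e$ one considers the set of points lying roughly on a geodesic from $e$ towards $x$, and uses $\delta$-hyperbolicity to show that $\ell(gh)-\ell(h)$ is determined up to a bounded error by the Gromov boundary direction of $h$. In the weakly geodesic setting I would use the Kasparov–Skandalis property: for any $x,y$ and $t\in[0,d(x,y)]$ there is $z$ with $|d(x,z)-t|\le\delta$ and $|d(z,y)-d(x,y)+t|\le\delta$. This property provides coarse midpoints, and together with hyperbolicity of $d_\ell$ it gives the same horofunction-type estimates up to additive constants depending on $\delta$.

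\textbf{Step 3.} Construct the approximation maps by cutting off with "cones" as Ozawa–Rieffel do. This yields finite-rank (finitely supported) pieces approximating $\lambda(f)$ for $f$ in the unit ball of $L$, with errors bounded by $C\,L(f)/R$ on the part of $f$ supported outside the ball of radius $R$. Rapid decay controls the operator norm of the tail, and the cone estimate controls $\ell$-weighted sums. Total boundedness then follows because the $\ell$-ball of radius $R$ is finite by properness.

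The main obstacle is Step 2. The Ozawa–Rieffel argument uses integer-valued geodesics in an essential combinatorial way, notably in counting points on geodesic segments and in the $\ell^2$ estimates on spheres $\{\ell=n\}$. For weakly geodesic real-valued $\ell$, I would first replace spheres by annuli $\{n\le\ell<n+1\}$. Weak geodesicity guarantees these annuli are nonempty and coarsely behave like spheres, so each counting step should go through up to constants depending on $\delta$.
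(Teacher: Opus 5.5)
There is a genuine gap: neither of your two routes contains the estimate that actually drives the proof.

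\textbf{The cone route.} The ``bounded cone''/Busemann reduction is not what Ozawa--Rieffel do for hyperbolic groups. It is closer to Rieffel's treatment of $\mathbb{Z}^d$, where commutativity is what converts information about the commutator into control of $\lambda(f)$. You give no mechanism for turning near-constancy of $h\mapsto \ell(gh)-\ell(h)$ into norm bounds for $\lambda(f)$ in the noncommutative algebra $C^*_r(G)$.

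\textbf{The multiplier route.} Items (1)--(2) are weak amenability, which you must import from Ozawa's theorem. Item (3) is automatic, since $[D,\lambda(M_\phi f)]=\tilde M_\phi([D,\lambda(f)])$. Item (4) is the whole theorem, and Step 3 does not prove it. The only coefficient information your sketch extracts from $L(f)\le 1$ is $\sum_k k^2\Vert f_k\Vert_2^2\le \Vert [D,\lambda(f)]\delta_e\Vert^2\le 1$, with $f_k=f|_{E_k}$. Rapid decay costs a polynomial factor per annulus; for free groups Haagerup's factor $k+1$ is sharp. But $\sum_{k>K}(1+k)\Vert f_k\Vert_2$ is not uniformly bounded on $\{\sum_k k^2\Vert f_k\Vert_2^2\le 1\}$. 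This is why the introduction records that it is open whether rapid decay suffices for the first-commutator seminorm. Your claimed error $C\,L(f)/R$ for the part of $f$ outside a ball is also unsupported. Truncation to balls is not a uniformly bounded multiplier (already on free groups its cb-norm grows linearly in the radius), so there is no evident way to control the truncated piece by $L(f)$.

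\textbf{What is missing} is two lemmas.

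(i) A \emph{uniform} block estimate $\Vert P_m\lambda(f)P_n\Vert\le C\Vert f\Vert_2$ for $f$ supported on $E_k$, with $C$ independent of $m,n,k$. The paper proceeds as follows:
\begin{itemize}
\item For each $x\in E_m\cap E_kE_n$, pick a weak-geodesic point $\bar x$ at parameter $r_x\approx (k-n+\ell(x))/2$.
\item The four-point condition for $e,x,\bar x,y$ shows that every factorization $x=yz$ with $y\in E_k$, $z\in E_n$ has $y\in \bar x B_\ell(R_x)$ and $z\in B_\ell(R_x)\bar x^{-1}x$.
\item The four-point condition for $e,y,\bar x,\bar{x'}$ shows that each $y$ (and symmetrically each $z$) admits at most $|B_\ell(2c+\tfrac72+2\delta)|$ such representations.
\item Cauchy--Schwarz then concludes.
\end{itemize}

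(ii) A band cutoff. This is where real-valuedness really bites, and you do not address it: Ozawa--Rieffel's Fourier series over the circle action $z^D$ needs $\ell$ to be integer-valued. The paper instead uses
\[
Q_{h,R}(T)=\int_\R \check{h_R}(t)\,e^{2\pi i tD}Te^{-2\pi itD}\,dt .
\]
This multiplies matrix entries by $h_R(\ell(x)-\ell(y))$, kills $P_mTP_n$ for $|m-n|>2R+2$, and satisfies $\Vert T-Q_{h,R}T\Vert\le R^{-1}\Vert\check{g_1}\Vert_1\Vert[D,T]\Vert$.

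Since only $O(R)$ diagonals survive, each controlled by (i), one gets
\[
\Vert \lambda(f)\Vert\le C_2(R)\sum_k\Vert f_k\Vert_2+\frac{C_1}{R}\,L(f).
\]
Here $\sum_k\Vert f_k\Vert_2$ is controlled by $L(f)$ via Cauchy--Schwarz against $\sum_k k^{-2}$, together with properness for $k=0$ when $f(e)=0$. Applying this to $f-M_{\phi_n}f$ verifies Proposition \ref{prop:CQMS-equiv-weak-amenable-gps}.

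Your idea of replacing spheres by annuli and geodesic points by weak-geodesic points is the right one. It has to be aimed at the block estimate (i), not at horofunction estimates.
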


Generalizing from word length functions to the setting of Theorem \ref{introthm:main-theorem} comes with additional difficulties. Key to the proof of the result concerning word hyperbolic groups in \cite{OzawaRieffel2005} is what they call a Haagerup-type estimate, the proof of which in particular uses the geodesic structure of the Cayley graph. The first difficulty to overcome is therefore adapting this result to the weakly geodesic setting, see Lemma \ref{lemma:annular-norm-control}. Explicitly, denoting by $E_k := \{x \in G \mid k \leq \ell(x) < k+1\}$ and letting $P_k$ be the associated orthogonal projection on $\ell^2(G)$, we show that there exists some constant $C >0$, depending only on 
$G$ and $\ell$, 
such that
\begin{align*}
    \Vert P_m \lambda(f) P_n \Vert \leq C \Vert f \Vert_2
\end{align*}
for all $m,n,k$ and all $f$ supported on $E_k$. 
The proof replaces exact points along a geodesic by approximate intermediate points determined by weak geodesicity, and then leverages hyperbolicity to keep uniform control over the multiplicities of elements appearing in factorizations of group elements. 

Another difficulty arises from the length functions merely being real-valued. In the integer-valued setting of \cite{OzawaRieffel2005}, the authors provide a continuous off-diagonal cutoff realized through a discrete Fourier series argument. This argument hinges crucially on the length function being integer-valued, which allows them to view it as an action of the circle group. Neither the argument nor the result is available when using non-integer-valued length functions, and we instead use a continuous Fourier cutoff associated with the one-parameter group
\begin{align*}
    \alpha_t (T) := e^{2\pi i t D} T e^{-2\pi i t D},\quad T \in B(\ell^2(G)). 
\end{align*}
By choosing a suitable compactly supported function $h \in C_c^\infty(\R)$ and $R \geq 1$, the resulting operator $Q_{h,R}(\lambda(f))$ for $f \in C_c(G)$ will satisfy
\begin{align*}
    P_m Q_{h,R}(\lambda(f)) P_n = 0 \quad \text{for $|m-n| >  2R +2 $}
\end{align*}
and 
\begin{align*}
    \Vert \lambda(f) - Q_{h,R}(\lambda(f)) \Vert \leq \frac{C_h}{R} \Vert [D , \lambda(f)] \Vert,
\end{align*}
for some $C_h \geq 0$ depending only on $h$. By combining these cutoffs with the generalized Haagerup-type condition we obtain a norm estimate which puts us in a position to apply the Fourier multiplier criterion from \cite{AustadGpdCQMS}. We then invoke weak amenability of hyperbolic groups, due to Ozawa \cite{OzawaHyperbolicWeakAmenable2008}, to supply the required family of finitely supported multipliers. 

Theorem \ref{introthm:main-theorem} produces compact quantum metric spaces from several other geometrically natural length functions on hyperbolic groups, of which we treat two in particular. Note that even though both  length functions are quasi-isometric to word length functions, the fact that they give rise to compact quantum metric structures does not follow from quasi-isometry. 
The first family of examples comes from random walks. Let $G$ be a non-elementary hyperbolic group and $\mu$ a symmetric finitely supported probability measure whose support generates $G$. The Green metric is then defined as
\begin{align*}
    d_\mu (x,y) = - \log F_\mu (x,y),
\end{align*} 
where $F_\mu(x,y)$ is the probability of ever hitting $y$ when starting at $x$.
Then $d_\mu $ is a left-invariant hyperbolic metric which is quasi-isometric to a word metric. In particular it follows that it is proper and weakly geodesic. It follows that the length function
\begin{align*}
    \ell_\mu (g ) = d_\mu (e,g)
\end{align*}
satisfies the conditions of Theorem \ref{introthm:main-theorem}, and we therefore obtain compact quantum metric spaces from symmetric finitely supported random walks on hyperbolic groups. 

The second family of examples arises from geometric actions. Suppose that a hyperbolic group $G$ acts properly, cocompactly and isometrically on a proper geodesic hyperbolic metric space $(X,d)$, and let $x_0 \in X$ be a point with trivial stabilizer. We then obtain a proper length function on $G$ through
\begin{align*}
    \ell_{x_0}(g) := d (x_0, gx_0)
\end{align*}
and the induced metric $d_{\ell_{x_0}}$ identifies $G$ isometrically with the orbit $Gx_0 \subseteq X$. Thus $(G,d_{\ell_{x_0}})$ is hyperbolic. Moreover, the \v{S}varc--Milnor lemma yields that $d_{\ell_{x_0}}$ is quasi-isometric to a word metric, and so $d_{\ell_{x_0}}$ is weakly geodesic. Applying Theorem \ref{introthm:main-theorem} we therefore get compact quantum metric space structures on $C_c(G)$ from these actions. 

These examples illustrate that there are natural choices of length functions other than the word length which encode geometric or probabilistic information for a hyperbolic group. 

The article is organized as follows. In Section \ref{sec:prelims} we collect necessary background material concerning length functions on groups, hyperbolicity and weak geodesicity, as well as basic results about compact quantum metric spaces. Most of Section \ref{sec:results} is devoted to the proof of Theorem \ref{introthm:main-theorem}. Along the way, we prove the generalized Haagerup condition Lemma \ref{lemma:annular-norm-control}, and the continuous Fourier cutoff needed to control the off-diagonal part of the regular representation in Lemma \ref{lemma:smooth-offdiagonal-cutoff}. Lastly, we apply Theorem \ref{introthm:main-theorem} to obtain compact quantum metric spaces from Green metrics associated with symmetric finitely supported random walks, and from length functions arising from proper cocompact isometric actions on proper geodesic hyperbolic spaces. 

\subsubsection*{Acknowledgments} The author was 
funded by the Research Council of Norway [project 359602].

\section{Preliminaries}\label{sec:prelims}

\subsection{Length functions and hyperbolicity}
Let $G$ be a countable discrete group, and denote by $e$ the unit of $G$. By a length function on $G$ we mean a map $\ell \colon G \to [0,\infty)$ such that
\begin{enumerate}
    \item $\ell(x) = 0$ if and only if $x = e$, 
    \item $\ell(x) = \ell(x^{-1})$ for all $x \in G$, 
    \item $\ell(xy) \leq \ell(x) + \ell(y)$ for all $x,y \in G$.
\end{enumerate}
We furthermore say $\ell$ is proper if $B_\ell(r) := \ell^{-1}([0,r])$ is a finite set for all $ r \geq 0$. Two length functions $\ell, \ell' \colon G \to [0,\infty)$ are said to be quasi-isometric if there exist $A \geq 1$ and $B \geq 0$ such that
\begin{align*}
    A^{-1} \ell(x) -B \leq \ell' (x) \leq A \ell(x) + B
\end{align*}
for all $x \in G$. 

We proceed to recall the definition of a hyperbolic metric space.
\begin{definition}\label{def:hyperbolic-metric-space}
    A metric space $(X,d)$ is said to be \emph{hyperbolic} (or $\delta$-hyperbolic) if there is $\delta \geq 0$ such that for any four points $x,y,z,w \in X$ we have
    \begin{align*}
        d(x,y) + d(z,w) \leq \max \{ d(x,z) + d(y,w), d(x,w) + d(y,z) \} + \delta .
    \end{align*}
\end{definition}
For a length function $\ell \colon G \to [0,\infty)$ we denote by $d_\ell$ the induced left-invariant metric on $G$
\begin{align*}
    d_\ell(x,y) = \ell(x^{-1}y).
\end{align*}
We shall be interested in length functions $\ell$ for which $(G, d_\ell)$ is $\delta$-hyperbolic for some $\delta \geq 0$. If $G$ is a finitely generated group with finite symmetric generating set $S$, and $\ell = \ell_S$ is the corresponding word length function, we say that $G$ is (word) hyperbolic if $(G, d_\ell)$ is $\delta$-hyperbolic for some $\delta \geq 0$.

For the purposes of our generalization of the argument from \cite[Proposition 4.3]{OzawaRieffel2005}, we will also need a weaker version of geodesicity. In \cite[Definition 2.1]{KasparovSkandalis2003}, a metric space $(X,d)$ is said to be $c$-weakly geodesic if for all $x,y \in X$ and all $r \in [0,d(x,y)]$ there is $z \in X$ such that
\begin{align*}
    d(z,x) \leq r +c, \quad d(z,y) \leq d(x,y) - r + c.
\end{align*}
We will consider length functions whose induced metrics have this property. 

    Using left-invariance of the metric $d_\ell$, it is not difficult to show that $\ell$ induces a weakly $c$-geodesic metric if and only if for all $x \in G$ and all $0 \leq r \leq \ell(x)$ there is $\overline{x} \in G$ such that
\begin{align}\label{eq:c-weak-geodesic-conditions}
    \vert \ell(\overline{x}) - r \vert \leq c \quad \text{and} \quad \vert \ell(\overline{x}^{-1}x) - (\ell(x) - r) \vert \leq c.
\end{align}
 
Beyond word length functions, we will see other examples of length functions inducing weakly geodesic metrics in Section \ref{subsec:Examples}. 

\begin{definition}\label{def:weakly-geodesic-length-function}
    We say that a length function $\ell \colon G \to [0,\infty)$ \emph{induces a weakly $c$-geodesic metric} if $(G,d_\ell)$ is weakly $c$-geodesic. We say $\ell$ induces a weakly geodesic metric if it induces a weakly $c$-geodesic metric for some $c \geq 0$. 
\end{definition}

We make the following short observation about groups admitting proper length functions inducing weakly geodesic metrics. 

\begin{lemma}\label{lemma:proper-weak-geodesic-is-fin-gen}
    Let $G$ be a countable discrete group, and let $\ell \colon G \to [0,\infty)$ be a proper length function inducing a weakly $c$-geodesic metric. Then $G$ is finitely generated and $\ell$ is quasi-isometric to a word length function on $G$. More precisely, if
    \begin{align*}
        S:=B_\ell(1+2c)\setminus\{e\},
    \end{align*}
    then $S$ is a finite symmetric generating set for $G$, and
    \begin{align*}
        \frac{1}{1+2c}\ell(g)\leq \ell_S(g)\leq \ell(g)+1
    \end{align*}
    for all $g \in G$.
\end{lemma}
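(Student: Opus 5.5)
The plan is to use weak geodesicity to break any group element into a chain of short steps, each lying in $S$. Both inequalities then follow from counting the steps and from the triangle inequality.

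First, $S$ is finite because $\ell$ is proper. It is symmetric because $\ell(x)=\ell(x^{-1})$. For the upper bound, we show by induction on $\lceil \ell(g)\rceil$ that every $g\ne e$ can be written as a product of at most $\lfloor \ell(g)\rfloor +1$ elements of $S$. This gives generation and $\ell_S(g)\le \ell(g)+1$ at the same time.

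\emph{Base case.} If $\ell(g)\le 1+2c$, then $g\in S$ directly, so $\ell_S(g)\le 1\le \ell(g)+1$.

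\emph{Inductive step.} If $\ell(g)>1+2c$, apply \eqref{eq:c-weak-geodesic-conditions} with $r=\ell(g)-1-c$, which lies in $[0,\ell(g)]$. This produces $\overline{x}$ with
\begin{align*}
\ell(\overline{x})\le \ell(g)-1, \qquad \ell(\overline{x}^{-1}g)\le 1+2c.
\end{align*}
So $\overline{x}^{-1}g\in S\cup\{e\}$. Since $\ell(\overline{x})\le \ell(g)-1$ has strictly smaller integer part, the induction hypothesis gives $\ell_S(\overline{x})\le \ell(\overline{x})+1\le \ell(g)$. Hence
\begin{align*}
\ell_S(g)\le \ell_S(\overline{x})+1\le \ell(g)+1.
\end{align*}

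The point requiring care is the choice of $r$, which must be made so that the remaining piece lands in $B_\ell(1+2c)$. Here the two-sided tolerance $c$ in the definition forces the radius $1+2c$: we lose $c$ once in estimating $\ell(\overline{x})$ and once in estimating $\ell(\overline{x}^{-1}g)$. The reduction by at least $1$ in $\ell$ at each step guarantees that the induction terminates.

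For the lower bound, write $g=s_1\cdots s_n$ with $n=\ell_S(g)$ and $s_i\in S$. Subadditivity of $\ell$ then gives
\begin{align*}
\ell(g)\le \sum_{i=1}^n \ell(s_i)\le n(1+2c),
\end{align*}
which is exactly the inequality $\frac{1}{1+2c}\ell(g)\le\ell_S(g)$.

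Finally, because $S$ is a finite generating set, $G$ is finitely generated. The two displayed bounds show that $\ell$ and $\ell_S$ are quasi-isometric, with constants $A=1+2c$ and $B=1$.
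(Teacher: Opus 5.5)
Your proof is correct and follows the same approach as the paper. You repeatedly apply weak geodesicity to split off an element of $S$ while lowering $\ell$ by at least $1$, and use subadditivity for the reverse bound. The only cosmetic difference is that you take $r=\ell(g)-1-c$ and peel the short piece off the right end, organized as an induction, whereas the paper takes $r=1+c$, peels the short piece off the left, and simply iterates.
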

\begin{proof}
    Since $\ell$ is a proper length function, $S$ is finite and symmetric. We first show $S$ generates $G$. 
    Now, if $\ell(g) > 1+c$, then apply weak geodesicity with $r = 1+c$ to find $h$ such that
    \begin{align*}
        \ell(h) \leq 2c+1 , \quad \ell(h^{-1}g) \leq \ell(g) -1.
    \end{align*}
    Iterating this process, we see that any $g$ can be written as a product of elements in $S$, that is, $S$ generates $G$. 
    
    Moreover, the argument shows that $\ell_S(g) \leq \ell(g) +1$. Conversely, as $\ell(s) \leq 1+2c$ for all $s \in S$, we have the trivial estimate $\ell(g) \leq (1+2c) \ell_S(g)$. 
    The result follows. 
\end{proof}

We will also need a result from \cite[pg. 1858]{CantrellTanaka2025}, citing \cite[Proposition 5.6]{BonkSchramm2000}, for which we need to relate the notion of weak geodesicity to rough geodesicity. 
\cite{CantrellTanaka2025} says a metric is $C$-roughly geodesic if for any pair $x,y \in G$ there is an interval $I$ and a map $\gamma \colon I \to G$ with $x$ and $y$ as endpoints, as well as
\begin{align*}
    |s-t| - C \leq d(\gamma(s), \gamma(t)) \leq |s-t|+C \quad \forall s,t\in I.
\end{align*}
This implies that the conditions of \eqref{eq:c-weak-geodesic-conditions} are satisfied: Let $x \in G$ be given, and let $0 \leq r \leq \ell(x)$. Let $\gamma \colon [a,b] \to G$ be a $C$-rough geodesic with endpoints given by $\gamma(a)= e$ and $\gamma(b) = x$. Note that
\begin{align*}
    |(b-a) - \ell(x)| \leq C
\end{align*}
by the above condition. Given $0 \leq r \leq \ell(x)$, pick
\begin{align*}
\overline{x} =
    \begin{cases}
        \gamma(a+r) & \text{if $a+r \in [a,b]$} \\
        \gamma(b)=x & \text{otherwise}. 
    \end{cases}
\end{align*}
We then easily check that $\vert \ell(\overline{x}) - r \vert \leq C$ in both cases. Moreover, using $|(b-a) - \ell(x)| \leq C$ from above, we may establish that
\begin{align*}
    -2C \leq \ell(\overline{x}^{-1}x) - (\ell(x) - r) \leq 2C.
\end{align*}
Hence every $C$-roughly geodesic left-invariant metric is weakly $2C$-geodesic in the sense of Definition \ref{def:weakly-geodesic-length-function}.  
We may now state the following observation noted in \cite[pg. 1858]{CantrellTanaka2025}. 
\begin{lemma}\label{lemma:rough-geodesic-lfs-on-hyperbolic-groups}
     Every hyperbolic left-invariant metric on a hyperbolic group which is quasi-isometric to a word metric is itself roughly geodesic. In particular this is true for $d_\ell$ for $\ell \colon G \to [0,\infty)$ a length function which is quasi-isometric to a word length function $\ell_S$ for some finite symmetric generating set $S$ for $G$. Consequently, if $\ell$ is a length function which is quasi-isometric to a word length function and for which $d_\ell$ is hyperbolic, then $d_\ell$ is also weakly geodesic. 
\end{lemma}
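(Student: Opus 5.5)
The first sentence of the lemma is the cited result of Bonk--Schramm \cite[Proposition 5.6]{BonkSchramm2000}, as recorded in \cite[pg.~1858]{CantrellTanaka2025}. I will invoke it as a black box rather than reprove it. For orientation, its proof goes as follows. A hyperbolic metric space that is quasi-isometric to a geodesic space is roughly geodesic. One takes a geodesic in the Cayley graph, maps it into $(G,d)$, and observes that hyperbolicity of $d$ together with the quasi-isometry forces the image to stay a bounded distance from a $C$-rough geodesic. This can be done, for example, by an additive reparametrization via Gromov products along a quasi-geodesic, with stability of quasi-geodesics in the Cayley graph (a Morse-type lemma) as the main input. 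I expect this to be the hard step, and I will not redo it.

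The remaining work is bookkeeping. First, for the second sentence I will check that $d_\ell$ falls under the hypotheses of the first. Since $\ell$ is a length function, $d_\ell(x,y)=\ell(x^{-1}y)$ is a genuine left-invariant metric. Suppose $A^{-1}\ell_S - B\le \ell\le A\ell_S+B$. By left-invariance,
\begin{align*}
A^{-1} d_S(x,y) - B \le d_\ell(x,y) \le A\, d_S(x,y) + B,
\end{align*}
where $d_S$ is the word metric of $S$. Hence the identity map $(G,d_S)\to(G,d_\ell)$ is a quasi-isometry. If in addition $d_\ell$ is hyperbolic, the first sentence applies and shows that $d_\ell$ is $C$-roughly geodesic for some $C\ge 0$.

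Finally, for the last sentence I will apply the computation carried out just before the statement. There it is shown that every $C$-roughly geodesic left-invariant metric satisfies \eqref{eq:c-weak-geodesic-conditions} with constant $2C$. Explicitly, one picks $\overline{x}=\gamma(a+r)$, or $\overline{x}=x$ when $a+r>b$, and uses $|(b-a)-\ell(x)|\le C$. So $d_\ell$ is weakly $2C$-geodesic in the sense of Definition~\ref{def:weakly-geodesic-length-function}.
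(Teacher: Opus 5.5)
Your proposal is correct and is essentially the paper's own argument: the first sentence is cited as a black box from Bonk--Schramm via Cantrell--Tanaka, the quasi-isometry of $d_\ell$ with $d_S$ follows from left-invariance exactly as you write, and the final claim is the computation just before the lemma showing that a $C$-roughly geodesic left-invariant metric is weakly $2C$-geodesic. As a side remark, the Morse-type stability needed in your orientation sketch concerns the image of a Cayley geodesic as a quasi-geodesic in $(G,d)$ itself (for instance after isometrically embedding $(G,d)$ into a geodesic hyperbolic space, as Bonk--Schramm do), not stability in the Cayley graph; since you only use the result as a black box, this does not affect your proof.
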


\subsection{Compact quantum metric spaces and group $C^*$-algebras}\label{subsec:prelims-CQMS}
As we will only be interested in quantum metrics on $C_c(G)$, we treat compact quantum metric spaces in the setting of dense $*$-subalgebras of $C^*$-algebras. 
Note that if $\mathcal{A}$ is a unital and dense $*$-subalgebra of a unital $C^*$-algebra $A$, then we may identify $S(\mathcal{A})$ and $S(A)$ through the restriction map, where $S(\mathcal{A})$ is the state space of the $*$-algebra $\mathcal{A}$. Positivity of elements is interpreted in the ambient $C^*$-algebra. 

The following terminology is from \cite{RieffelMartricialBridges2016}.
\begin{definition}
    Let $\mathcal{A}$ be a unital and dense $*$-subalgebra of a unital $C^*$-algebra $A$. A seminorm $L \colon \mathcal{A} \to [0,\infty)$ is a \emph{slip-norm} if it satisfies
    \begin{enumerate}
        \item $L(a^*) = L(a)$ for all $a \in \mathcal{A}$,
        \item $\C \cdot 1_{\mathcal{A}} \subseteq \ker L := \{a \in \mathcal{A} \mid L(a) = 0 \}$. 
    \end{enumerate}
\end{definition}
Given a slip-norm $L$ on $\mathcal{A}$, we consider the associated \emph{Monge--Kantorovi\v{c}} metric $\operatorname{mk}_L$ on the state space $S(\mathcal{A})$, given by
\begin{align*}
    \operatorname{mk}_L (\phi, \psi ) := \sup \{ \vert \phi(a) - \psi(a) \vert \mid L(a) \leq 1 \}.
\end{align*}
Note that despite calling $\operatorname{mk}_L$ a metric, it is a priori just an extended metric on $S(\mathcal{A})$. 

\begin{definition}\label{def:CQMS}
    Suppose $\mathcal{A}$ is a dense and unital $*$-subalgebra of a unital $C^*$-algebra $A$, and suppose $L \colon \mathcal{A} \to [0, \infty)$ is a slip-norm. If $\operatorname{mk}_L$ induces the weak$^*$ topology on $S(\mathcal{A})$, we say that $(\mathcal{A}, L)$ is a \emph{compact quantum metric space}. 
\end{definition}

Now, fix a countable discrete group $G$. To such a group we may associate a $*$-algebra $C_c(G)$ consisting of the finitely supported functions on $G$, equipped with convolution and involution
\begin{align*}
    f_1*f_2 (g) = \sum_{h\in G} f_1(h) f_2 (h^{-1}g), \quad f^*(g) = \overline{f(g^{-1})}, \quad 
    \text{for $f, f_1, f_2 \in C_c(G)$}. 
\end{align*}
The left regular representation $\lambda \colon C_c(G) \to B(\ell^2(G))$ is given by
\begin{align*}
    \lambda(f)(\xi) = f * \xi 
\end{align*}
for $\xi \in \ell^2(G)$ and $f \in C_c (G)$, and the resulting $C^*$-completion of $C_c(G)$ is the reduced group $C^*$-algebra, denoted by $C^*_r(G)$. 

Given a proper length function $\ell \colon G \to [0,\infty)$ we obtain an unbounded self-adjoint operator on $\ell^2(G)$, given by the self-adjoint closure of the operator $D \colon C_c(G) \to \ell^2(G)$ which acts as
\begin{align*}
    D \delta_g = \ell(g) \delta_g,
\end{align*}
where $\{\delta_g\}_{g\in G}$ is the canonical basis for $\ell^2(G)$. We denote the self-adjoint extension by $D$ as well. The operator $D$ interacts nicely with the left regular representation in that for every $f \in C_c(G)$, the operator $[D, \lambda(f)]$ extends to an element of $B(\ell^2(G))$. We may therefore define the seminorm
\begin{align}\label{eq:def-L-ell}
    L \colon C_c (G) \to [0,\infty), \quad L(f) = \Vert [D, \lambda(f)]\Vert.
\end{align}
We wish to understand when $(C_c(G), L)$ is a compact quantum metric space, under the assumption that $G$ is a hyperbolic group and $\ell\colon G \to [0,\infty)$ is a proper length function. It is not difficult to see that $L$ is a slip-norm, and as such it remains to verify that the Monge-Kantorovi\v{c} metric induces the weak$^*$ topology on $S(C_c(G))$. We will not do so directly, but instead apply a result from \cite{AustadGpdCQMS} inspired by \cite[Proposition 1.3]{OzawaRieffel2005}, see Proposition \ref{prop:CQMS-equiv-weak-amenable-gps}.

Any $\phi \in C_c(G)$ defines a Fourier multiplier 
\begin{align*}
    M_\phi \colon C^*_r(G) \to C^*_r(G), \quad M_\phi(f)(g) = \phi(g) f(g),
\end{align*}
which is completely bounded. We denote the completely bounded norm by $\Vert M_\phi \Vert_{\rm cb}$. A countable discrete group is said to be \emph{weakly amenable} if it admits a sequence $(\phi_n)_n$ of finitely supported functions converging pointwise to $1$, and for which $\phi_n(e) = 1$ for all $n$ and $\sup_{n}\Vert M_{\phi_n}\Vert_{\rm cb} < \infty$. 
By \cite[Corollary 3.16]{AustadGpdCQMS} we have the following
\begin{proposition}\label{prop:CQMS-equiv-weak-amenable-gps}
    Suppose $G$ is a weakly amenable group, and let $(\phi_n)_n$ be a sequence witnessing the weak amenability. Let $\ell$ be any proper length function, and denote by $E$ the set
    \begin{align*}
        E := \{f \in C_c(G) \mid f(e)=0 \text{ and } L(f) \leq 1 \}.
    \end{align*}
    Then $(C_c(G), L)$ is a compact quantum metric space if and only if for every $\varepsilon > 0$ there is $N = N_\varepsilon$ such that
    \begin{align*}
        \sup_{f \in E} \Vert f - M_{\phi_n}(f) \Vert < \varepsilon
    \end{align*}
    for all $n \geq N$. 
\end{proposition}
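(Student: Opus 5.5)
The plan is to reduce the statement to Rieffel's total boundedness criterion and then compare $E$ with its images under the multipliers $M_{\phi_n}$.

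\textbf{Step 1: reduction to total boundedness of $E$.} Rieffel's criterion says that a slip-norm $L$ on a dense unital $*$-subalgebra gives a compact quantum metric space if and only if two things hold: $\ker L=\C 1$, and the image of $\{a \mid L(a)\leq 1\}$ in $A/\C 1$ is totally bounded.

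For the kernel, we pair $[D,\lambda(f)]$ against basis vectors. This gives
\begin{align*}
\langle [D,\lambda(f)]\delta_e,\delta_g\rangle=(\ell(g)-\ell(e))f(g)=\ell(g)f(g).
\end{align*}
Hence
\begin{align*}
|f(g)|\leq \frac{L(f)}{\ell(g)}\quad\text{for } g\neq e. \qquad (\ast)
\end{align*}
In particular, $L(f)=0$ forces $f=f(e)\delta_e$, so $\ker L=\C 1$.

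For the quotient, the map $f\mapsto f-f(e)\delta_e$ preserves $L$. The trace $\tau(f)=f(e)$ is bounded on $C^*_r(G)$. So total boundedness of the image in $C^*_r(G)/\C 1$ is equivalent to total boundedness of $E$ in $C^*_r(G)$. It therefore suffices to show that $E$ is totally bounded if and only if $M_{\phi_n}\to \mathrm{id}$ uniformly on $E$.

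\textbf{Step 2: total boundedness implies uniform approximation.} Set $K:=\sup_n\Vert M_{\phi_n}\Vert_{\rm cb}<\infty$.

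First, $M_{\phi_n}(a)\to a$ in norm for every $a\in C^*_r(G)$. For $f\in C_c(G)$ this holds because only finitely many coefficients are involved and $\phi_n\to 1$ pointwise. For general $a$ it follows by density, using the uniform bound $K$.

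Now assume $E$ is totally bounded. Given $\varepsilon>0$, cover $E$ by finitely many $\varepsilon/(2+2K)$-balls centred at points $f_1,\dots,f_k$. Choose $N$ such that $\Vert f_j-M_{\phi_n}(f_j)\Vert<\varepsilon/2$ for all $j$ and all $n\geq N$. The standard $\varepsilon/3$-type argument, using the equicontinuity provided by $K$, then gives $\sup_{f\in E}\Vert f-M_{\phi_n}(f)\Vert<\varepsilon$ for all $n\geq N$.

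\textbf{Step 3: uniform approximation implies total boundedness.} Fix $\varepsilon$ and choose $n$ from the hypothesis. The set $M_{\phi_n}(E)$ consists of functions supported in the finite set $F=\operatorname{supp}\phi_n$, and each such function vanishes at $e$.

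Estimate $(\ast)$ now gives, for $f\in E$,
\begin{align*}
\Vert M_{\phi_n}(f)\Vert\leq \sum_{g\in F\setminus\{e\}}|\phi_n(g)|\,\ell(g)^{-1}<\infty .
\end{align*}
So $M_{\phi_n}(E)$ is a bounded subset of a finite-dimensional space, and hence it is totally bounded. Since $E$ lies within $\varepsilon$ of $M_{\phi_n}(E)$ and $\varepsilon$ was arbitrary, $E$ is totally bounded.

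\textbf{Main obstacle.} The only genuinely analytic points are the coefficient bound $(\ast)$ and the equicontinuity argument in Step 2. I expect neither to cause real difficulty; the rest is bookkeeping with Rieffel's criterion.
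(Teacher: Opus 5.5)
Your proof is correct, but it does not follow the paper, which gives no proof of this proposition. The paper imports the result from \cite[Corollary 3.16]{AustadGpdCQMS}. Its only addition is the remark that, because $\phi_n(e)=1$ and $L(f-f(e)\lambda_e)=L(f)$, one has $f-M_{\phi_n}(f)=f_0-M_{\phi_n}(f_0)$ with $f_0=f-f(e)\lambda_e$. Hence the supremum in the cited criterion may be restricted to $E$.

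You instead derive the equivalence directly from Rieffel's total boundedness criterion, as follows.
\begin{itemize}
\item You normalize with the trace $\tau$. This identifies $E=\{f \mid L(f)\leq 1,\ \tau(f)=0\}$, via the Banach isomorphism $\ker\tau\cong C^*_r(G)/\C 1$, with the image of the $L$-unit ball in the quotient.
\item The coefficient bound $\ell(g)|f(g)|\leq L(f)$ gives both $\ker L=\C 1$ and boundedness of $M_{\phi_n}(E)$ inside the finite-dimensional span of $\{\lambda_g \mid g\in\operatorname{supp}\phi_n\}$.
\item For the converse, you use an equicontinuity argument based on $\sup_n\Vert M_{\phi_n}\Vert<\infty$, in the spirit of \cite[Proposition 1.3]{OzawaRieffel2005}.
\end{itemize}

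The citation buys brevity and inherits the groupoid generality of the source. Your route is self-contained and shows exactly which hypotheses this equivalence uses: finite support of $\phi_n$, pointwise convergence $\phi_n\to 1$, a uniform bound on the operator norms (not the cb norms) of $M_{\phi_n}$, and $\ell(g)>0$ for $g\neq e$. Properness of $\ell$ is never used. The normalization $\phi_n(e)=1$ is also unnecessary, because working with $E$ from the start makes the paper's reduction remark superfluous.
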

Note that since $\varphi_n(e)=1$, the multiplier $M_{\varphi_n}$ fixes scalar multiples of $\lambda_e$. Moreover, $L(f-f(e)\lambda_e)=L(f)$. Hence the criterion in \cite[Corollary 3.16]{AustadGpdCQMS} is unchanged if the supremum is restricted to functions satisfying $f(e)=0$.

We remark that hyperbolic groups are weakly amenable by \cite{OzawaHyperbolicWeakAmenable2008}.

\section{Results}\label{sec:results}

\subsection{A generalized Haagerup-type condition}
The following lemma will be key to proving Theorem \ref{thm:main-theorem}. The proof follows the philosophy of \cite[Proposition 4.3]{OzawaRieffel2005}, but is complicated by the lack of true geodesicity.

\begin{lemma}\label{lemma:annular-norm-control}
   Let $G$ be a 
   countable discrete
   group, and suppose $\ell \colon G \to [0,\infty)$ is a proper length function such that $(G,d_\ell)$ is a $\delta$-hyperbolic and weakly $c$-geodesic metric space. 
   Denote by $E_m := \{x \in G \mid m \leq \ell(x) < m+1 \}$, 
   and let 
   $P_m \in B(\ell^2(G))$ be the orthogonal projection onto $\ell^2(E_m)$. 
   Then there exists $C \geq 0$, depending only on $G$ and $\ell$, such that for all $m,n,k \in \N_0$ and any $f \in C_c(G)$ supported on $E_k$, we have
   \begin{align*}
       \Vert P_m \lambda(f) P_n \Vert \leq C \Vert f \Vert_2. 
   \end{align*}
   Explicitly, we may take any $C \geq |B_\ell(2c + \frac{7}{2}+2\delta)\vert$. 
\end{lemma}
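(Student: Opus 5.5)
We follow Ozawa--Rieffel's Haagerup-type argument (Proposition 4.3 of \cite{OzawaRieffel2005}), replacing the points of a geodesic by the approximate midpoints that weak geodesicity provides. Fix $m,n,k$ and $f$ supported on $E_k$. For $\xi\in\ell^2(E_n)$ and $\eta\in\ell^2(E_m)$ we have
\begin{align*}
\langle \lambda(f)\xi,\eta\rangle=\sum_{x\in E_k,\,y\in E_n,\,xy\in E_m} f(x)\xi(y)\overline{\eta(xy)}.
\end{align*}
By the triangle inequality the sum is empty unless $|m-n|\le k+1$ and $m\le n+k+1$ and similar. The triple $(e,x^{-1},y)$ gives a geodesic-like triangle with sides $\ell(x)\approx k$, $\ell(y)\approx n$, $\ell(xy)\approx m$. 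We set the Gromov-product parameter $r:=\tfrac12(k+n-m)$, clamped to $[0,\ell(x)]$, and apply weak geodesicity \eqref{eq:c-weak-geodesic-conditions} to $x$ with this $r$. This gives $x=x_1x_2$ with $\ell(x_1)\approx k-r$ and $\ell(x_2)\approx r$, each up to $c$ (plus the $O(1)$ coming from the width of the annuli). The intended picture is that $x_2$ cancels against the beginning of $y$, so $\ell(x_2y)\approx n-r$, and $x_1$ then extends $x_2y$ to $xy$.

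The key step uses $\delta$-hyperbolicity (the four-point condition applied to $e$, $x_1$, $x$, $xy$) to show that
\begin{align*}
z:=x_2y \quad\text{satisfies}\quad \ell(z)\in[\,n-r-K,\ n-r+K\,]
\end{align*}
for a constant $K$ of the form $2c+O(1)+\delta$. Moreover, writing $w:=x_2$, the element $w$ determines a bounded ambiguity. For fixed $z$ and fixed $w'$ with $\ell(w')\approx r$, the number of admissible $w$ with $w^{-1}w'$ short is at most $|B_\ell(2c+\tfrac72+2\delta)|$. The approach is to decompose
\begin{align*}
|\langle\lambda(f)\xi,\eta\rangle|\le\sum_{x_1,x_2,y}|f(x_1x_2)|\,|\xi(y)|\,|\eta(x_1x_2y)|,
\end{align*}
where for each $x$ one chosen factorization is used. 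We then group by $z=x_2y$, which lies in a bounded-width annulus, and apply Cauchy--Schwarz twice, in the variables $(x_1,z)$ and $(x_2,z)$. This yields the bound
\begin{align*}
\Big(\sum_{x_1,z}|f_1(x_1)|^2\cdots\Big)^{1/2}\Big(\cdots\Big)^{1/2}\le C\|f\|_2\|\xi\|_2\|\eta\|_2,
\end{align*}
as in Haagerup's inequality. The resulting constant is the maximal number of $x$ mapping to a given pair, which gives $C=|B_\ell(2c+\tfrac72+2\delta)|$.

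The main obstacle is the multiplicity count. In the geodesic case the factorization is essentially exact, so each $x$ contributes once and each $(x_1,z)$ pair determines $x_2,y$ almost uniquely. With only approximate midpoints we must show that whenever two factorizations $x=x_1x_2$ and $x'=x_1'x_2'$ give the same $x_1$ and the same $z$, the elements $x_2$ and $x_2'$ differ by an element of length at most $2c+\tfrac72+2\delta$. The plan for this is to compare both with a common point using the four-point inequality on $e$, $x_1$, $x_1x_2$, $x_1x_2'$: both $x_2$ and $x_2'$ have length $\approx r$ and both lie on a quasi-geodesic from $x_1^{-1}$ toward $z$. Hyperbolicity then forces $d(x_1x_2,x_1x_2')\le 2\delta+2c+O(1)$. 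Tracking the additive constants carefully, including the $\tfrac12$ from the parity of $k+n-m$ and the $1$ from the annulus widths, should produce the stated value $2c+\tfrac72+2\delta$.
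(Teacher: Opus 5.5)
\textbf{Assessment.} There is a genuine gap at the step you single out as the main obstacle. That step is false, and it is not the multiplicity bound the argument needs. Your overall route can be made to work, and it differs from the paper's.

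\textbf{Comparison of routes.} You split the argument $x\in E_k$ of $f$ at an approximate point and substitute $z=x_2y$, as in Haagerup's original proof. The paper instead takes the approximate intermediate point $\bar x$ on the \emph{product}, the element of $E_m$ (called $x$ there). It shows that every factorization of that element as $E_k\cdot E_n$ has first factor within $R_x$ of $\bar x$, applies Cauchy--Schwarz pointwise in the product, and bounds the fibres $|\overline F_y|$ and $|\tilde F_z|$.

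\textbf{Why your multiplicity claim fails.} For fixed $(x_1,z)$, the admissible $x_2$ are \emph{not} confined to a ball of bounded radius. Take $F_2=\langle a,b\rangle$ with word length; this is a tree, so $\delta=0$, and one may take $c=\tfrac12$, which forces the chosen point to be the exact midpoint. Let $k=n=m=2N$, $r=N$ and $x_1=z=a^N$. Every reduced word $x_2$ of length $N$ beginning with $b^{\pm1}$ gives $x=a^Nx_2\in E_{2N}$ with chosen factorization $(a^N,x_2)$, $y=x_2^{-1}a^N\in E_{2N}$, and $xy=a^{2N}\in E_{2N}$. There are $2\cdot 3^{N-1}$ such $x_2$, with mutual distances up to $2N$. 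So your claimed bound on $\ell(x_2^{-1}x_2')$ fails. So does your remark that in the geodesic case $(x_1,z)$ determines $x_2,y$ almost uniquely. The four-point inequality on $e,x_1,x_1x_2,x_1x_2'$ cannot help, since $x_1x_2$ and $x_1x_2'$ are unrelated elements of $E_k$ sharing only an initial piece. A minor slip as well: applying weak geodesicity ``with this $r$'' yields a point of length $\approx r$; to get $\ell(x_1)\approx k-r$ you need the parameter $\ell(x)-r$.

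\textbf{What actually has to be bounded.} The inner Cauchy--Schwarz in $x_2$, for fixed $(x_1,z)$, absorbs that sum, so its size is irrelevant. What must be bounded uniformly are the fibres of the two outer maps. Injectivity of $x\mapsto(x_1,x_2)$ is automatic.
\begin{itemize}
\item For $w\in E_m$: the number of chosen first factors $x_1$ with $x_1^{-1}w$ in the $z$-range.
\item For $y\in E_n$: the number of chosen second factors $x_2$ with $x_2y$ in the $z$-range.
\end{itemize}

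\textbf{How to bound them.} Use the four-point condition on $e,w,x_1,x_1'$ and on $e,y,x_2^{-1},x_2'^{-1}$. When the sum is nonempty, the clamping error of $r$ is $<1$. Hence
\[
\ell(x_1)<k+1-r+c,\qquad \ell(x_2)<r+1+c,\qquad \ell(z)<n-r+c+2+\delta .
\]
It follows that $\ell(x_1)+\ell(x_1'^{-1}w)<m+2c+3+\delta$ and $\ell(x_2)+\ell(x_2'y)<n+2c+3+\delta$. So each fibre has cardinality at most $|B_\ell(2c+3+2\delta)|$. Then
\[
|\langle\lambda(f)\xi,\eta\rangle|\le\sqrt{N_\eta N_\xi}\,\Vert f\Vert_2\Vert\xi\Vert_2\Vert\eta\Vert_2,
\]
which recovers the lemma within the stated constant. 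As written, however, your proposal neither identifies these counts nor derives the constant.
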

\begin{proof}
    We need to show that
    \begin{align*}
        \Vert P_m(f * \xi) \Vert \leq C \Vert f\Vert_2 \Vert \xi \Vert_2
    \end{align*}
    for any $\xi \in \ell^2(G)$  supported on $E_n$. Fix such a $\xi$, and set $X_{m,n,k} := E_m \cap E_kE_n$. If $x \in E_m \setminus X_{m,n,k}$, then $f*\xi(x) = 0$, and so it suffices to consider $x \in X_{m,n,k}$. For such $x$, let $y \in E_k$ and $z \in E_n$ be arbitrary elements such that $yz = x$. 

    Now set
    \begin{align*}
        a_x = \frac{k+n+1 - \ell(x)}{2} , \quad b_x = \frac{k-n +\ell(x)}{2} .
    \end{align*}
     Since $\ell(x) \geq \ell(z) - \ell(y) > n - (k+1)$ for $y \in E_k$ and $z \in E_n$, we see that $b_x > -\frac{1}{2}$. 
    Using $\ell(x) \geq \ell(y) - \ell(z)$ we also establish that $\ell(x) - b_x > -\frac{1}{2}$. 
    
    Now define
    \begin{align*}
        r_x = \begin{cases}
            0 & b_x < 0 \\
            b_x & 0\leq b_x \leq \ell(x) \\
            \ell(x) & b_x > \ell(x)
        \end{cases}
    \end{align*}
    which immediately implies $|r_x - b_x| < \frac{1}{2}$ by the above observations. 
    For each $x \in X_{m,n,k}$, we use weak geodesicity to fix an element $\overline{x} \in G$ satisfying 
    \begin{align*}
        |\ell(\overline{x}) - r_x | \leq c, \quad | \ell(\overline{x}^{-1}x) - (\ell(x) - r_x)| \leq c,
    \end{align*}
    and set $\tilde{x} := \overline{x}^{-1}x$.

    Applying the hyperbolicity criterion from Definition \ref{def:hyperbolic-metric-space} to the points $e, x, \overline{x}, y$, we obtain
    \begin{align*}
        \ell(x) + \ell(y^{-1}\overline{x}) \leq \max \{ \ell(\overline{x}) + \ell(z) , \ell(y) + \ell(\tilde{x}) \} + \delta, 
    \end{align*}
    where we have used $z = y^{-1}x$ and $\tilde{x} = \overline{x}^{-1}x$. Now,
    \begin{align*}
        \ell(\overline{x}) + \ell(z) < r_x + c  + n +1 \leq b_x + \frac{1}{2} + c + n+1 = a_x + \ell(x) + c + 1.
    \end{align*}
    Moreover,
    \begin{align*}
        \ell(y) + \ell(\overline{x}^{-1}x) < k+1 + \ell(x) - r_x +c \leq k+1 +\ell(x) -b_x + \frac{1}{2} +c  = a_x + \ell(x) +c + 1.
    \end{align*}
    This now implies
    \begin{align*}
        \ell(y^{-1}\overline{x}) \leq - \ell(x) + a_x + \ell(x) + c + 1 + \delta = a_x + c +1+\delta
    \end{align*}
    Set $R_x := a_x + c+ 1 +\delta$. 
    Since the preceding estimate holds for every factorization $x = yz$ with $y \in E_k$ and $z \in E_n$, the Cauchy--Schwarz inequality yields
    \begin{align*}
        \Vert P_m(f*\xi)\Vert_2^2
\leq
\sum_{x\in X_{m,n,k}}
\sum_{\ell(u)\leq R_x}
\sum_{\ell(v)\leq R_x}
|f(\bar xu)|^2|\xi(v\widetilde x)|^2.
    \end{align*}

    For each term in the triple sum, define $y = \bar{x}u$ and $z = v\tilde{x}$. We only have nonzero contributions for $y \in E_k$ and $z \in E_n$. 
    We can then write
    \begin{align}\label{eq:multiplicity-convolution-bound-Nyz}
        \Vert P_m (f*\xi) \Vert^2 \leq  \sum_{y \in E_k} \sum_{z \in E_n} N(y,z) \vert f(y) \vert^2 \cdot \vert \xi(z) \vert^2
    \end{align}
    for
    \begin{align*}
        N(y,z) := |\{(x,u,v) \mid x \in X_{m,n,k}, y = \bar{x}u, z= v\tilde{x}\text{ and } \ell(u),\ell(v) \leq R_x \}|.
    \end{align*}
    It suffices to show that $N(y,z)$ is uniformly bounded for all $y$ and $z$. To this end, we define
    \begin{align*}
        \overline{F}_y :&= \{ u \in G \mid  \text{  there is $x \in X_{m,n,k}$ such that $y = \overline{x}u$} \text{ and } \ell(u) \leq R_x \} \\
        \tilde{F}_z :&=  \{ v \in G \mid  \text{  there is $x \in X_{m,n,k}$ such that $z = v\tilde{x}$ and $\ell(v) \leq R_x$} \}.
    \end{align*}
    For fixed $y,z,u,v$, the element $x$ is uniquely determined by $x=\bar x\widetilde x
=yu^{-1}v^{-1}z$.
Hence the map
$(x,u,v)\mapsto (u,v)$ 
is injective on the set counted by $N(y,z)$, and therefore
\begin{align*}
    N(y,z)\leq |F_y|\,|\widetilde F_z|.
\end{align*}  
    We therefore aim to show that $\vert \overline{F}_y \vert $ and $\vert \tilde{F}_z \vert$ are bounded independently of $y$ and $z$.

    Let $u,w \in \overline{F}_y$, and choose $x,x' \in X_{m,n,k}$ such that
    \begin{align*}
        y = \overline{x}u = \overline{x'}w
    \end{align*}
    with $\ell(u) \leq R_x$ and $\ell(w) \leq R_{x'}$. Set $s:= \overline{x}$ and $t:= \overline{x'}$, so that $y = su = tw$. Since $x,x' \in E_m$, we have
    $\vert \ell(x) - \ell(x') \vert < 1$ and therefore $\vert b_x - b_{x'} \vert \leq \frac{1}{2}$. Moreover, because $x,x' \in X_{m,n,k}$, we have
    \begin{align*}
        \vert r_x - b_x \vert < \frac{1}{2} \quad \text{and} \quad \vert r_{x'} - b_{x'} \vert < \frac{1}{2}.
    \end{align*}
    It follows that $\vert r_x - r_{x'}\vert < \frac{3}{2}$ by the triangle inequality. 
    
    By construction we have
    \begin{align*}
        \ell(s) &\leq r_{x} +c, \quad \ell(u) \leq a_x + 1 +c +\delta \\
        \ell(t) &\leq r_{x'} + c, \quad \ell(w) \leq a_{x'} +1+c+\delta.
    \end{align*}
    Now, $a_x = k - b_x + \frac{1}{2}$ and $|b_x - r_x | < \frac{1}{2}$, and similarly for $x'$. Thus
    \begin{align*}
        a_x \leq k+1 - r_x , \quad a_{x'} \leq k+1 - r_{x'},
    \end{align*}
    and so 
    \begin{align*}
        \ell(u) &\leq k-r_x +2+c+\delta \\ 
        \ell(w) &\leq k-r_{x'} +2 +c +\delta.
    \end{align*}
    In particular,
    \begin{align*}
        \ell(s) + \ell(w) &\leq r_x + c + k - r_{x'} +2 +c +\delta \leq k + 2c + \frac{7}{2} + \delta \\
        \ell(t) + \ell(u) &\leq r_{x'} + c + k - r_{x} +2+c+\delta \leq k + 2c  + \frac{7}{2} + \delta
    \end{align*}
    where we have used $|r_x - r_{x'}| < \frac{3}{2}$. 
    We then apply the hyperbolicity condition to $e,y,s,t$ to obtain
    \begin{align*}
        \ell(y) + \ell(s^{-1}t) \leq \max \{ \ell(s) + \ell(w), \ell(t) + \ell(u) \} +\delta ,
    \end{align*}
    and since $\ell(y) \geq k$, we deduce
    \begin{align*}
        \ell(u w^{-1}) = \ell(s^{-1}t) \leq 2c  + \frac{7}{2} + 2\delta.
    \end{align*}

    If $\overline{F}_y = \emptyset$, the bound is immediate. Otherwise,
    fix $w \in \overline{F}_y$. The map $\overline{F}_y \to G$ , $u \mapsto uw^{-1}$
    is injective and takes values in $B_\ell(2c +\frac{7}{2} + 2\delta )$. Thus
    \begin{align*}
        \vert \overline{F}_y \vert \leq | B_\ell(2c +\frac{7}{2} + 2\delta ) \vert.
    \end{align*}
    Since $\ell$ is proper, this is a finite value, independent of $y$. 

    The same bound holds for $\tilde{F}_z$. Indeed, 
$\tilde{x}=\bar x^{-1}x$, where $\bar x$ was chosen so that
\begin{align*}
    |\ell(\bar x)-r_x|\leq c,
    \qquad
    |\ell(\tilde{x})-(\ell(x)-r_x)|\leq c.
\end{align*}
It follows that $\tilde{x}^{-1}$ is a $c$-approximate intermediate
point for $x^{-1}$ at distance $\ell(x)-r_x$ from $e$. Indeed, since $\ell(\tilde{x}^{-1}) = \ell(\tilde{x})$ we have
\begin{align*}
    |\ell(\tilde{x}^{-1})-(\ell(x)-r_x)|\leq c,
\end{align*}
while
$(\tilde{x}^{-1})^{-1}x^{-1}
    =\tilde{x} x^{-1}
    =\bar x^{-1}$,
and thus
\begin{align*}
     \bigl|\ell((\tilde{x}^{-1})^{-1}x^{-1})-r_x\bigr|
    =|\ell(\bar x)-r_x|
    \leq c.
\end{align*} 
These are precisely the two inequalities
for an approximate intermediate point for $x^{-1}$ corresponding to
the 
parameter $\ell(x)-r_x$.

Now, if $v\in\tilde{F}_z$, then for some $x\in X_{m,n,k}$, we have $z=v\tilde{x}$ with $\ell(v)\leq R_x$, 
and therefore
\begin{align*}
    z^{-1}=\tilde{x}^{-1}v^{-1}.
\end{align*}
Noting that under $x \mapsto x^{-1}$ and the interchange of $k$ and $n$, $a_x$ remains unchanged and the corresponding $r$-parameter is $\ell(x) - r_x$, the preceding argument for $\overline{F}_y$ applied mutatis mutandis to $z^{-1}$  bounds the number of possible
values of $v^{-1}$ by $|B_\ell\!\left(2c+\frac72+2\delta\right)|$. 
Since inversion is a bijection, we conclude that
\begin{align*}
    |\tilde{F}_z|
    \leq
    \left|B_\ell\!\left(2c+\frac72+2\delta\right)\right|.
\end{align*}
    Set 
    \begin{align*}
        C := \vert B_\ell(2c + \frac{7}{2} + 2\delta) \vert.
    \end{align*}
    
    By \eqref{eq:multiplicity-convolution-bound-Nyz} above we immediately see    
    \begin{align*}
         \Vert P_m (f*\xi) \Vert^2 \leq  \sum_{y \in E_k} \sum_{z\in E_n} N(y,z) \vert f(y) \vert^2 \cdot \vert \xi(z) \vert^2 \leq C^2 \bigg( \sum_{y\in E_k} \vert f(y) \vert^2 \bigg)\bigg( \sum_{z \in E_n} \vert \xi(z) \vert^2 \bigg),
    \end{align*}
    from which we deduce $\Vert P_m (f*\xi)\Vert \leq C \Vert f\Vert_2 \Vert \xi \Vert_2$ and thus $\Vert P_m \lambda(f) P_n \Vert \leq C \Vert f \Vert_2$. This finishes the proof. 
\end{proof}

\subsection{Controlled off-diagonal cutoff}

Let $\{\delta_x\}_{x\in G}$ be the canonical basis for $\ell^2(G)$. For $T \in B(\ell^2(G))$, define
\begin{align*}
    T_{x,y} := \langle T \delta_y, \delta_x \rangle. 
\end{align*}
We may identify $T$ with the $G\times G$-indexed infinite matrix $(T_{x,y})_{x,y}$. 

Let $\alpha_t(T) = e^{2\pi itD}Te^{-2\pi itD}$ for $T \in B(\ell^2(G))$. Note that $t \mapsto \alpha_t(T)$ may not be norm continuous, but it is always strong operator topology continuous. 
We record the following useful identities. 

\begin{lemma}\label{lemma:matrix-coeff-commutator-identities}
    For any $T \in  B(\ell^2(G))$ we have
    \begin{align*}
         \alpha_t(T)_{x,y} = e^{2\pi it(\ell(x) - \ell(y))} T_{x,y}.
    \end{align*}
    Furthermore, suppose $T\operatorname{Dom}(D)\subseteq\operatorname{Dom}(D)$ and the commutator $[D,T]$, initially defined on $\operatorname{Dom}(D)$, extends to an element of $B(\ell^2(G))$. Then
    \begin{align*}
        [D,T]_{x,y} = (\ell(x) - \ell(y)) T_{x,y},
    \end{align*}
    where we have also denoted the extension by $[D,T]$. This is in particular true for $T = \lambda(f)$ for $f \in C_c(G)$. 
\end{lemma}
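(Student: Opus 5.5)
The plan is to compute both identities on the canonical basis, since $D$ is diagonal in $\{\delta_x\}_{x\in G}$, and then check that the commutator identity survives passing to the bounded extension.

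\textbf{First identity.} By the functional calculus for the diagonal self-adjoint operator $D$, the unitary $e^{2\pi i tD}$ acts by $e^{2\pi i tD}\delta_x = e^{2\pi i t\ell(x)}\delta_x$. Hence, using $(e^{2\pi i tD})^* = e^{-2\pi i tD}$,
\begin{align*}
\alpha_t(T)_{x,y} = \langle T e^{-2\pi i tD}\delta_y, e^{-2\pi i tD}\delta_x\rangle = e^{-2\pi i t\ell(y)}\,\overline{e^{-2\pi i t\ell(x)}}\,T_{x,y},
\end{align*}
which is exactly $e^{2\pi i t(\ell(x)-\ell(y))}T_{x,y}$.

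\textbf{Second identity.} Each $\delta_y$ lies in $C_c(G)\subseteq \operatorname{Dom}(D)$, and by hypothesis $T\delta_y\in\operatorname{Dom}(D)$. Since $D$ is self-adjoint and $\delta_x\in\operatorname{Dom}(D)$ with $D\delta_x=\ell(x)\delta_x$, we get
\begin{align*}
\langle DT\delta_y,\delta_x\rangle = \langle T\delta_y, D\delta_x\rangle = \ell(x)T_{x,y},
\end{align*}
using that $\ell(x)$ is real. Also $\langle TD\delta_y,\delta_x\rangle = \ell(y)T_{x,y}$. The bounded extension agrees with $DT-TD$ on $\delta_y\in\operatorname{Dom}(D)$, so its matrix coefficients are $(\ell(x)-\ell(y))T_{x,y}$.

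\textbf{The case $T=\lambda(f)$.} The only point needing care is verifying the hypotheses for $f\in C_c(G)$, and this is where I expect the main (mild) work.

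First, $\lambda(f)$ preserves $\operatorname{Dom}(D)$. Here $\operatorname{Dom}(D)=\{\xi \mid \sum_g \ell(g)^2|\xi(g)|^2<\infty\}$, since the closure of a diagonal operator on $C_c(G)$ is the maximal multiplication operator. We have $\lambda(f)=\sum_s f(s)\lambda_s$, a finite sum. Subadditivity gives $\ell(sg)\le \ell(s)+\ell(g)$, so $\|D\lambda_s\xi\|\le \ell(s)\|\xi\|+\|D\xi\|$. Hence each $\lambda_s$, and therefore $\lambda(f)$, maps $\operatorname{Dom}(D)$ into itself.

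Second, the commutator is bounded. On $\operatorname{Dom}(D)$ the operator $[D,\lambda_s]$ acts as $\lambda_s$ composed with multiplication by the function $g\mapsto \ell(sg)-\ell(g)$. This function is bounded by $\ell(s)$ in absolute value, again by subadditivity and symmetry. So $\|[D,\lambda(f)]\|\le \sum_s|f(s)|\ell(s)$, the commutator extends boundedly, and the general identity applies.
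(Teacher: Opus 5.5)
Your proposal is correct and follows the paper's proof: both identities are obtained by evaluating on the canonical basis and moving $e^{-2\pi i tD}$ (respectively $D$) onto $\delta_x$ by adjointness. Your additional check that $\lambda(f)$ preserves $\operatorname{Dom}(D)$ and has bounded commutator, with $\Vert [D,\lambda(f)]\Vert \leq \sum_s |f(s)|\,\ell(s)$, is sound and fills in a point the paper only asserts.
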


\begin{proof}
    Noting that 
    $ e^{\pm 2\pi itD}\delta_x = e^{\pm 2\pi it\ell(x)} \delta_x$ 
    we may calculate
    \begin{align*}
        \alpha_t(T)_{x,y} &= \langle e^{2\pi itD}Te^{-2\pi itD} \delta_y, \delta_x \rangle = \langle T e^{-2\pi itD} \delta_y, e^{-2\pi itD}\delta_x \rangle \\
        &= e^{2\pi it(\ell(x) - \ell(y))} \langle T\delta_y, \delta_x \rangle =  e^{2\pi it(\ell(x) - \ell(y))} T_{x,y}.
    \end{align*}
    The second statement follows by the calculation
    \begin{align*}
        [D,T]_{x,y} :&= \langle [D,T]\delta_y, \delta_x \rangle = \langle T \delta_y , D\delta_x \rangle - \langle TD\delta_y , \delta_x\rangle \\
        &= \ell(x) \langle T \delta_y , \delta_x \rangle - \ell(y) \langle T \delta_y , \delta_x \rangle = (\ell(x) - \ell(y)) T_{x,y}. 
    \end{align*}
    
\end{proof}

For the next lemma we need some notation. 
For $h \colon \R \to \C$, and any $R \geq 1$, denote by $h_R$ the function
\begin{align}\label{eq:R-scaling}
    h_R(t) = h(\frac{t}{R}).
\end{align}
If its (inverse) Fourier transform exists, we denote it by $\check{h}$ with the convention
\begin{align}\label{eq:inv-FT-convention}
    \check{h}(t) = \int_\R h(u) e^{2\pi itu} du.
\end{align}

The following lemma generalizes the main result of \cite[Section 2]{OzawaRieffel2005}. The fact that we are working with length functions which are not integer-valued complicates matters.

\begin{lemma}\label{lemma:smooth-offdiagonal-cutoff}
    Let $h \in C^\infty_c(\R)$ be an even function which takes values in $[0,1]$ and is such that 
    \begin{align*}
        h(t) = \begin{cases}
            1 & \text{if $|t|\leq 1$} \\
            0 & \text{if $|t| \geq 2$} .
        \end{cases}
    \end{align*}
    Let $R \geq 1$, and define
    \begin{align*}
        g_{R}(t) := 
        \begin{cases}
            \frac{1-h_R(t)}{t} & \text{if $t\neq 0$} \\
            0 & \text{if $t = 0$},
        \end{cases}
    \end{align*}
    where $h_R$ is defined as in \eqref{eq:R-scaling}. 
    Set $Q_{h,R} \colon B(\ell^2(G)) \to B(\ell^2(G))$ to be the linear map
    \begin{align*}
        Q_{h,R} T := \int_\R  \check{h_R}(t) \alpha_t(T) dt,
    \end{align*}
    where $\check{h_R}$ is the inverse Fourier transform of $h_R$. We interpret the integral in the strong operator topology sense. 
    Then 
    \begin{align*}
        P_m ( Q_{h,R}T) P_n = 0
    \end{align*}
    for $|m-n| > 2R+2$, and if $T \operatorname{Dom}(D) \subseteq \operatorname{Dom}(D)$ and $[D,T]$ extends boundedly to an operator on $\ell^2(G)$, we have
    \begin{align*}
        \Vert T - Q_{h,R}T \Vert \leq \frac{\Vert \check{g_1}\Vert_1}{R} \Vert [D,T] \Vert,
    \end{align*}
    where $\check{g_1}$ is the inverse Fourier transform of $g_1$.
\end{lemma}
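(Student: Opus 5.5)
The plan is to work entirely at the level of matrix coefficients, using Lemma \ref{lemma:matrix-coeff-commutator-identities}. Since $h_R\in C_c^\infty(\R)$, its inverse Fourier transform $\check{h_R}$ is a Schwartz function, so $t\mapsto \check{h_R}(t)\alpha_t(T)$ is strongly continuous and norm-integrable. Hence $Q_{h,R}T$ is a well-defined bounded operator with $\Vert Q_{h,R}T\Vert\le \Vert\check{h_R}\Vert_1\Vert T\Vert$. Pairing with $\delta_y,\delta_x$ and using the first identity of Lemma \ref{lemma:matrix-coeff-commutator-identities}, I expect
\begin{align*}
(Q_{h,R}T)_{x,y}=T_{x,y}\int_\R \check{h_R}(t)e^{2\pi i t(\ell(x)-\ell(y))}\,dt = h_R\bigl(\ell(y)-\ell(x)\bigr)T_{x,y}=h_R\bigl(\ell(x)-\ell(y)\bigr)T_{x,y},
\end{align*}
where the middle equality is Fourier inversion for the Schwartz function $h_R$ (the sign convention is immaterial because $h$ is even). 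So $Q_{h,R}$ is the Schur multiplier with symbol $h_R(\ell(x)-\ell(y))$.

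The support claim then follows directly. If $x\in E_m$ and $y\in E_n$, then $|\ell(x)-\ell(y)|>|m-n|-1>2R+1\ge 2R$, so $h_R(\ell(x)-\ell(y))=0$. Since $P_m(Q_{h,R}T)P_n$ has matrix entries supported on $E_m\times E_n$, it vanishes.

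For the norm estimate, I would note that $(T-Q_{h,R}T)_{x,y}=(1-h_R(\ell(x)-\ell(y)))T_{x,y}$. Where $\ell(x)\neq\ell(y)$, this equals $g_R(\ell(x)-\ell(y))\,(\ell(x)-\ell(y))T_{x,y}=g_R(\ell(x)-\ell(y))[D,T]_{x,y}$. Where $\ell(x)=\ell(y)$, both sides vanish because $h_R(0)=1$ and $g_R(0)=0$. Thus $T-Q_{h,R}T$ is the Schur multiple of $[D,T]$ by $g_R(\ell(x)-\ell(y))$, and I aim to realise it as $\int_\R \check{g_R}(t)\alpha_t([D,T])\,dt$.

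Scaling gives $g_R(t)=R^{-1}g_1(t/R)$, so $\check{g_R}(t)=g_1^\vee(Rt)$ and $\Vert\check{g_R}\Vert_1=R^{-1}\Vert\check{g_1}\Vert_1$. Together with $\Vert\alpha_t(S)\Vert=\Vert S\Vert$, this yields the stated bound $\Vert T-Q_{h,R}T\Vert\le R^{-1}\Vert\check{g_1}\Vert_1\Vert[D,T]\Vert$.

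The main obstacle is showing that $\check{g_1}\in L^1(\R)$. The function $g_1$ is smooth: it vanishes on $[-1,1]$ and equals $1/t$ for $|t|\ge 2$. It is therefore odd and not integrable, so $\check{g_1}$ exists only as a tempered distribution. I would split $g_1 = (1-h(t))/t$ into a part that, for large $|t|$, is exactly $1/t$, with Fourier transform $\pi i\,\mathrm{sgn}$ (up to constants), and subtract a compactly supported smooth correction. The resulting $\check{g_1}$ is smooth away from $0$ and decays faster than any polynomial at infinity, because $g_1'$ is integrable and all its derivatives are integrable. Near $0$, I expect only a bounded jump discontinuity, coming from the $1/t$ tail. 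Hence $\check{g_1}\in L^1$.

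Concretely, I would write $\check{g_1}(t)=(2\pi i t)^{-1}\widecheck{g_1'}(t)$, using $g_1'\in L^1$ with all derivatives in $L^1$. This gives rapid decay for $|t|\ge1$, while $|\check{g_1}|$ stays bounded near $0$; both facts need to be verified carefully. With $\check{g_1}\in L^1$ established, the matrix-coefficient computation above justifies the integral identity, again by Fourier inversion, now in the $L^1$/distributional sense evaluated at points where $g_1$ is continuous, which is everywhere.
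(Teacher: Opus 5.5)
Your proposal is correct, and its skeleton is the paper's: the Schur-multiplier formula $(Q_{h,R}T)_{x,y}=h_R(\ell(x)-\ell(y))T_{x,y}$ via Lemma~\ref{lemma:matrix-coeff-commutator-identities}, and the band structure read off from it. Then you identify $T-Q_{h,R}T$ with a strong integral of $\alpha_t([D,T])$ against $\check{g_R}$ by comparing matrix entries, and finish with the scaling $\Vert\check{g_R}\Vert_1=R^{-1}\Vert\check{g_1}\Vert_1$. (Since $g_R$ is odd, the convention \eqref{eq:inv-FT-convention} actually gives $T-Q_{h,R}T=-\int_\R\check{g_R}(t)\alpha_t([D,T])\,dt$, as in the paper. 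The sign is irrelevant for the norm.)

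The real difference is in the step you flag as the main obstacle. The paper deduces $\check{g_1}\in L^1(\R)$ directly from $g_1,g_1'\in L^2(\R)$. This is the standard argument: by Plancherel $(1+4\pi^2t^2)^{1/2}\check{g_1}\in L^2$, and Cauchy--Schwarz against $(1+4\pi^2t^2)^{-1/2}\in L^2$ gives integrability.

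Your route via $\check{g_1}(t)=-(2\pi i t)^{-1}\check{g_1'}(t)$ also works. Since $g_1''\in L^1$, you get $|\check{g_1}(t)|=O(t^{-2})$ for $|t|\geq 1$. Near $0$, even the crude bound $|\check{g_1'}(t)|\lesssim|t|(1+\log(1/|t|))$, which follows from $\int_\R g_1'=0$, suffices for integrability; boundedness then follows from the evenness of $g_1'$. Note, however, that $g_1\in L^2$, so $\check{g_1}$ is a genuine function and not merely a tempered distribution. This is what lets you divide by $t$ without worrying about a hidden $\delta_0$ term.

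Your first idea can also be made exact if you treat the correction $h(t)/t$ as a principal value; it is not smooth at $0$. One then finds $\check{g_1}(t)=2\pi i\operatorname{sgn}(t)\int_{|t|}^\infty\check{h}(s)\,ds$, which shows the bounded jump at $0$ and the rapid decay at once.

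So your approach gives an explicit description of $\check{g_1}$, whereas the paper's Sobolev-type argument is shorter and needs no local analysis near $0$.
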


\begin{proof}
    Note first that $h_R \in C^\infty_c(\R)$ for all $R \geq 1$, and so $\check{h_R}$ makes sense and $\Vert \check{h_R} \Vert_1 < \infty$. Likewise, due to the assumptions on $h$, we realize $g_1, g_1' \in L^2(\R)$, and  $\check{g_1} \in L^1(\R)$, i.e. $\Vert \check{g_1}\Vert_1 < \infty$. Now, using the convention of \eqref{eq:inv-FT-convention} and noting that $g_1(t/R) = R g_R (t)$, straightforward calculations will show that
    \begin{align*}
        \Vert \check{h_R} \Vert_1 = \Vert \check{h_1}\Vert_1 \quad \text{and} \quad \Vert \check{g_R} \Vert_1 = \frac{1}{R} \Vert \check{g_1} \Vert_1.
    \end{align*}
    In particular, for $T \in B(\ell^2(G))$ we have
    \begin{align*}
        \Vert Q_{h,R}T \Vert = \Vert \int_\R  \check{h_R}(t) \alpha_t(T) dt \Vert \leq \int_\R \vert \check{h_R}\vert \Vert \alpha_t(T) \Vert dt\leq \Vert \check{h_R}\Vert_1 \cdot \Vert T \Vert,
    \end{align*}
    where we have used $\Vert \alpha_t(T) \Vert = \Vert T \Vert$. Thus $Q_{h,R} \colon B(\ell^2(G)) \to B(\ell^2(G)) $ is a bounded linear map. 

    Using Lemma \ref{lemma:matrix-coeff-commutator-identities}, we observe that for $x,y \in G$ we have
    \begin{align*}
        (Q_{h,R}T)_{x,y} &= \langle Q_{h,R}T \delta_y , \delta_x \rangle = \langle \int_\R \check{h_R}(t) \alpha_t(T) dt \delta_y, \delta_x \rangle \\
        &= \int_\R  \check{h_R}(t)  \langle \alpha_t (T) \delta_y, \delta_x \rangle dt 
        = \int_\R \check{h_R}(t) \alpha_t(T)_{x,y} dt \\
        &= \int \check{h_R}(t) e^{2\pi it(\ell(x) - \ell(y))} T_{x,y} dt 
        = \int \check{h_R}(t) e^{2\pi it(\ell(x) - \ell(y))}  dt T_{x,y} \\
        &= h_R(\ell(y) - \ell(x)) T_{x,y} 
        = h_R(\ell(x) - \ell(y)) T_{x,y},
    \end{align*}
    where in the last step we used that $h_R$ is an even function. Then we immediately see that
    \begin{align*}
        P_m Q_{h,R}(T) P_n = 0 \quad\text{if $|m-n| > 2R +2$}. 
    \end{align*}
    Using Lemma \ref{lemma:matrix-coeff-commutator-identities} again, we may also calculate
    \begin{align*}
        (\int_\R \check{g_R}(t) \alpha_t([D,T]) dt)_{x,y} &= \langle \int_\R \check{g_R}(t) \alpha_t([D,T]) dt \delta_y, \delta_x \rangle =  \int_\R \check{g_R}(t) \langle \alpha_t ([D,T]) \delta_y, \delta_x \rangle dt \\
        &= \int_\R \check{g_R}(t) (\alpha_t([D,T]))_{x,y} dt = \int_\R \check{g_R}(t) e^{2\pi it(\ell(x) - \ell(y))} [D,T]_{x,y} dt \\
        &= \int_\R \check{g_R}(t) e^{2\pi it(\ell(x) - \ell(y))} (\ell(x) - \ell(y)) T_{x,y} dt \\
        &= (\ell(x) - \ell(y)) \cdot g_R(\ell(y) - \ell(x)) T_{x,y} \\
        &= -(1- h_R(\ell(y)-\ell(x))) T_{x,y}\\ &= -(1 - h_R(\ell(x) - \ell(y))) T_{x,y},
    \end{align*}
    since $h_R$ is assumed to be even. 
    Comparing with the previous calculation we deduce that
    \begin{align*}
        T - Q_{h,R}T = - \int_\R \check{g_R}(t) \alpha_t([D,T]) dt.
    \end{align*}
    The integral converges in the strong operator topology as $ [D,T] \in B(\ell^2(G))$ and $\Vert \check{g_R} \Vert_1 \leq R^{-1} \Vert \check{g_1} \Vert_1 < \infty$. It follows that
    \begin{align*}
        \Vert T - Q_{h,R}T \Vert \leq R^{-1} \Vert \check{g_1} \Vert_1 \cdot \Vert [D,T] \Vert. 
    \end{align*}
\end{proof}

\subsection{Proof of the main theorem}

\begin{theorem}\label{thm:main-theorem}
    Let $G$ be a  hyperbolic group, and suppose $\ell\colon G \to [0,\infty)$ is a proper length function for which $(G,d_\ell)$ is a hyperbolic and weakly geodesic metric space. Then $(C_c(G), L)$ is a compact quantum metric space. 
\end{theorem}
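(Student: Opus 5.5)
We verify the criterion of Proposition \ref{prop:CQMS-equiv-weak-amenable-gps}, using Ozawa's weak amenability of hyperbolic groups \cite{OzawaHyperbolicWeakAmenable2008} to fix a sequence $(\phi_n)_n$ with $\phi_n(e)=1$, $\phi_n\to 1$ pointwise and $K:=\sup_n\Vert M_{\phi_n}\Vert_{\rm cb}<\infty$. Fix $\varepsilon>0$ and $f\in E$, i.e. $f(e)=0$ and $L(f)\leq 1$. The plan is to split $f$ into a piece close to it in norm with controlled support in lengths, plus a tail, and then use pointwise convergence of $\phi_n$ on a finite set together with the uniform cb-bound on the remainder.

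\textbf{Step 1 (off-diagonal cutoff).} By Lemma \ref{lemma:smooth-offdiagonal-cutoff} with $T=\lambda(f)$, choose $R$ so that $\Vert\lambda(f)-Q_{h,R}\lambda(f)\Vert\leq \Vert\check{g_1}\Vert_1/R<\varepsilon$. Then $Q_{h,R}\lambda(f)$ is band-limited, with $P_mQ_{h,R}\lambda(f)P_n=0$ for $|m-n|>2R+2$. The matrix coefficients of $Q_{h,R}\lambda(f)$ are $h_R(\ell(x)-\ell(y))f(xy^{-1})$. This operator is not in $\lambda(C_c(G))$, but it commutes appropriately with the right regular representation only up to the weights, so instead of applying $M_{\phi_n}$ to it I work directly with $f$ and use the cutoff only for estimates.

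\textbf{Step 2 (tail bound via the Haagerup-type lemma).} Write $f=\sum_k f_k$ with $f_k=f\,1_{E_k}$. The key estimate to establish is that for $N$ large, $\Vert\lambda(\sum_{k> N} f_k)\Vert\leq \varepsilon'$ uniformly over $f\in E$. For $x\in E_m$, $y\in E_n$ with $xy^{-1}\in E_k$ we have $|\ell(x)-\ell(y)|$ comparable to $k$ only when the triangle is degenerate; in general the commutator bound gives $\sum_{y}|\ell(x)-\ell(y)|^2|f(xy^{-1})|^2\leq 1$. Following \cite{OzawaRieffel2005}, for $T=\lambda(f_{>N})$ I estimate $\Vert T\Vert\leq \Vert T-Q_{h,R}T\Vert+\Vert Q_{h,R}T\Vert$. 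The first term is handled by Lemma \ref{lemma:smooth-offdiagonal-cutoff}, after checking that $L(f_{>N})\lesssim L(f)$. This requires that $\lambda(1_{E_{>N}}f)$ has commutator controlled by $L(f)$; I expect to get this by using $[D,\cdot]$ applied to the Schur multiplier picture, or alternatively to replace the sharp cutoff by a smooth one. The second term is a band of width $2R+2$. Each block $P_m\lambda(f_k)P_n$ with $|m-n|\leq 2R+2$ is bounded by $C\Vert f_k\Vert_2$ by Lemma \ref{lemma:annular-norm-control}. Moreover, $\Vert f_k\Vert_2\leq \Vert [D,\lambda(f)]\delta_e\Vert$-type quantities: since $[D,\lambda(f)]\delta_e=\sum_g\ell(g)f(g)\delta_g$, we get $\sum_k k^2\Vert f_k\Vert_2^2\leq L(f)^2\leq 1$. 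Summing the band, which has at most $4R+5$ nonzero block-diagonals, and using a Cauchy--Schwarz in $k$, gives a bound of order $C(4R+5)\sum_{k>N}\Vert f_k\Vert_2\leq C(4R+5)(\sum_{k>N}k^{-2})^{1/2}$, which is small for $N$ large given $R$. Care is needed in controlling $\Vert\sum_m P_m T P_{m+j}\Vert$ by $\sup_m\Vert P_mTP_{m+j}\Vert$, but this is a standard block-diagonal estimate.

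\textbf{Step 3 (conclusion).} Write $f=f_{\leq N}+f_{>N}$, where $f_{\leq N}$ is supported in the finite set $B_\ell(N+1)$ with $\Vert f_{\leq N}\Vert_2\leq \Vert f\Vert_2\leq 1$ (as $\ell\geq$ some $\ell_0>0$ off $e$). Then
\[\Vert f-M_{\phi_n}f\Vert\leq \Vert f_{\leq N}-M_{\phi_n}f_{\leq N}\Vert+(1+K)\Vert\lambda(f_{>N})\Vert.\]
The first term is at most $|B_\ell(N+1)|\cdot\sup_{g\in B_\ell(N+1)}|1-\phi_n(g)|$, which tends to $0$ uniformly in $f$. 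The second term is at most $(1+K)\varepsilon'$ by Step 2.

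I expect the main obstacle to be Step 2: transferring the commutator bound to the truncated tail $f_{>N}$ and organizing the banded sum so that the annular estimate of Lemma \ref{lemma:annular-norm-control} combines with the decay $\sum_k k^2\Vert f_k\Vert_2^2\leq1$ uniformly in $f$.
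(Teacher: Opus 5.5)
Your ingredients are the right ones, but Step 2 has a genuine gap, exactly where you flag it. To control $\Vert\lambda(f_{>N})-Q_{h,R}\lambda(f_{>N})\Vert$ through Lemma \ref{lemma:smooth-offdiagonal-cutoff}, you need $L(f_{>N})\le c_N L(f)$ with $c_N$ growing slowly, and nothing gives this. The sharp truncation $f\mapsto f_{>N}$ is the Fourier multiplier whose symbol is the indicator $\psi_N$ of $\bigcup_{k>N}E_k$. Your Schur-multiplier picture therefore only yields $L(f_{>N})=\Vert\tilde M_{\psi_N}([D,\lambda(f)])\Vert\le\Vert M_{\psi_N}\Vert_{\rm cb}\,L(f)$. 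But sharp length truncations are not uniformly completely bounded: their cb norms grow like $\log N$ for $\mathbb{Z}$, and linearly in $N$ for free groups with word length. Your two error terms are of order $c_N/R$ and $R(\sum_{k>N}k^{-2})^{1/2}\approx R/\sqrt N$, so you would need $c_N=o(\sqrt N)$, which you have no way to establish. A smooth radial cutoff $\chi(\ell/N)$ does not help either. It would require uniform cb bounds on these radial multipliers for an arbitrary weakly geodesic hyperbolic $\ell$, which are not available; this would amount to a new proof of weak amenability with these particular witnesses.

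The missing idea is to reverse the order of operations, so that no truncation ever has to be controlled in $L$. Apply Lemma \ref{lemma:smooth-offdiagonal-cutoff} to $T=\lambda(f-M_{\phi_n}f)$ itself. This operator lies in $\lambda(C_c(G))$. The intertwining $[D,\lambda(M_{\phi_n}f)]=\tilde M_{\phi_n}([D,\lambda(f)])$, with $\tilde M_{\phi_n}$ the Schur-multiplier extension of cb norm at most $K$, gives $L(f-M_{\phi_n}f)\le(1+K)L(f)$ uniformly in $n$. This also makes your Step 1 concern that $Q_{h,R}\lambda(f)\notin\lambda(C_c(G))$ moot: $M_{\phi_n}$ is applied before $Q_{h,R}$, never after. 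The band-limited part is then bounded exactly as in your Step 2, by a constant $C_2=C_2(R)$ times $\sum_k\Vert(1-\phi_n)f_k\Vert_2$. Now the length decomposition enters only through $\ell^2$-norms, where sharp truncation is harmless. For $k>N$, use $|1-\phi_n|\le 1+K$ and Cauchy--Schwarz against $\sum_k k^2\Vert f_k\Vert_2^2\le 1$. For $k\le N$, use pointwise convergence of $\phi_n$ on the finite ball $B_\ell(N+1)$, together with $\Vert f\Vert_2\le(\inf_{x\neq e}\ell(x))^{-1}L(f)$; note this replaces your claim $\Vert f\Vert_2\le 1$. Choosing $R$, then $N$, then $n$ finishes the proof; this is the paper's argument.
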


\begin{proof}
   Having established Lemma \ref{lemma:annular-norm-control} and Lemma \ref{lemma:smooth-offdiagonal-cutoff}, we are in a position to verify the condition from Proposition \ref{prop:CQMS-equiv-weak-amenable-gps} and thus show that $(C_c(G), L)$ is a compact quantum metric space. 

    As noted previously, $G$ being hyperbolic implies it is weakly amenable. Let $(\phi_n)_n \subseteq C_c(G)$ be a sequence witnessing the weak amenability. 
   Set $M := \sup_n \Vert M_{\phi_n} \Vert_{\rm cb}$, which is finite by assumption. Moreover, $\phi_n(x) \to 1$ for every $x \in G$. 

   By \cite[Proposition D.6]{BrownOzawa08}, $M_{\phi_n}$ extends to a bounded map $\tilde{M}_{\phi_n}$ on $B(\ell^2(G))$, with
\begin{align*}
    \tilde{M}_{\phi_n}(T)_{x,y} = \phi_n(xy^{-1}) T_{x,y}
\end{align*}
   and
   \begin{align*}
       \Vert \tilde{M}_{\phi_n} \Vert_{\rm cb} = \Vert M_{\phi_n} \Vert_{\rm cb} \leq M \quad \text{for all $n$}.
   \end{align*}
   As observed in \cite[Lemma 3.12]{AustadGpdCQMS} 
   \begin{align*}
       [D, \lambda(M_{\phi_n}(f))] = \tilde{M}_{\phi_n} ([D, \lambda(f)]) 
   \end{align*}
    for all $f \in C_c(G)$. 
    To ease notation in the sequel, we write
    \begin{align*}
        f^{(n)} := f - M_{\phi_n}(f).
    \end{align*}
    It then follows that
    \begin{align}\label{eq:f-phinf-commutator-bound}
        \Vert [D, \lambda(f^{(n)})] \Vert \leq (1 + M) \Vert [D, \lambda(f)] \Vert
    \end{align}
    Moreover, there is  an upper bound on the modulus of $\phi_n(x)$ as
    \begin{align*}
        \vert \phi_n (x) \vert = \Vert M_{\phi_n}(\lambda_x) \Vert \leq \Vert M_{\phi_n} \Vert_{\rm cb} = M, 
    \end{align*}
    uniformly in $n$. As such we deduce that
    \begin{align*}
        | 1- \phi_n(x) | \leq (1+M) ,
    \end{align*}
    which we will use below.

   For $f \in C_c(G)$ we write $f = \sum_{k \in \N_0} f_k$, where $f_k = f_{\vert_{E_k}}$ is the restriction of $f$ to the $k$-annulus $E_k = \{x \in G \mid k \leq \ell(x) < k+1 \}$. By the trivial estimate
   \begin{align*}
       \sum_{x \in G} |f(x)|^2 \ell(x)^2 = \Vert [D,\lambda(f) ] \delta_e \Vert_2^2 \leq \Vert [D,\lambda(f)] \Vert ^2
   \end{align*}
   and using $\ell(x) \geq k$ for $x \in E_k$, we obtain
   \begin{align}\label{eq:lower-annulus-L-bound}
       \sum_{k \in \N} k^2 \Vert f_k\Vert_2^2 \leq \Vert [D,\lambda(f)]\Vert^2. 
   \end{align}
   Then, by Lemma \ref{lemma:smooth-offdiagonal-cutoff} and \eqref{eq:f-phinf-commutator-bound}
   \begin{align*}
       \Vert \lambda(f^{(n)})-Q_{h,R}(\lambda(f^{(n)}))\Vert &\leq \frac{\Vert\check{g_1}\Vert_1}{R} \Vert [D, \lambda(f^{(n)}) ]\Vert \notag \\
       &\leq \frac{\Vert\check{g_1}\Vert_1}{R} \cdot (1+M)\cdot  \Vert [D, \lambda(f)] \Vert,
   \end{align*}
   where $g_1$ is as in Lemma \ref{lemma:smooth-offdiagonal-cutoff}. 

   As $P_m, P_n$ commute with $D$, we may write
   \begin{align*}
       P_m Q_{h,R}(\lambda(f_k))P_n = \int_\R \check{h_R}(t) \alpha_t (P_m \lambda(f_k) P_n) dt,
   \end{align*}
   and so Lemma \ref{lemma:annular-norm-control} and the fact that $\Vert \check{h_R} \Vert_1 = \Vert \check{h_1}\Vert_1$ yields
   \begin{align*}
       \Vert P_m Q_{h,R}(\lambda(f_k))P_n \Vert &\leq \int_\R \vert \check{h_R}(t) \vert \Vert \alpha_t(P_m \lambda(f_k) P_n)\Vert dt \\
       &= \Vert \check{h_R} \Vert_1 \Vert P_m \lambda(f_k) P_n \Vert \\
       &\leq \Vert \check{h_1} \Vert_1 C \Vert f_k\Vert_2.
   \end{align*}
    Noting that $P_mQ_{h,R}(T)P_n=0$ for $|m-n| > 2R+2$, we write 
    \begin{align*}
        Q_{h,R}(\lambda(f)) = \sum_{|j|\leq \lceil 2R +2 \rceil} \sum_{m \in \N_0} P_m Q_{h,R}(\lambda(f)) P_{m-j},
    \end{align*}
    where we interpret $P_{m-j} = 0$ if $m-j < 0$. 
    
    For each fixed $j$, the sum over $m$ is understood in the strong operator topology. Since the projections $P_m$ are mutually orthogonal, both the domain projections $P_{m-j}$ and the range projections $P_m$ in this sum are pairwise orthogonal. Analogously to \cite{OzawaRieffel2005}, we deduce
    \begin{align*}
        \Vert \sum_{m\in \N_0} P_m Q_{h,R}(\lambda(f_k)) P_{m-j}\Vert = \sup_{m \in \N_0} \Vert P_m Q_{h,R}(\lambda(f_k)) P_{m-j} \Vert .
    \end{align*}
    Then
    \begin{align*}
        \Vert Q_{h,R}(\lambda(f)) \Vert &= \Vert \sum_{k \in \N_0}\sum_{|j|\leq \lceil 2R+2\rceil} \sum_{m\in \N_0} P_m Q_{h,R} (\lambda(f_k)) P_{m-j} \Vert \notag \\
        &\leq \sum_{k \in \N_0} \sum_{|j|\leq \lceil 2R +2 \rceil} \Vert \sum_{m\in \N_0} P_m Q_{h,R}(\lambda(f_k)) P_{m-j}\Vert \notag \\
        &\leq \sum_{k \in \N_0} \sum_{|j|\leq \lceil 2R +2 \rceil} \sup_{m \in \N_0} \Vert P_m Q_{h,R}(\lambda(f_k)) P_{m-j} \Vert  \notag \\
        &\leq (2 \lceil 2R +2 \rceil +1) \Vert \check{h_1}\Vert_1 C \sum_{k\in \N_0} \Vert f_k \Vert_2
    \end{align*}

   We proceed to bound the norm of the remainder for large values of $k$.
   Note that by \eqref{eq:lower-annulus-L-bound} and using Cauchy-Schwarz we have, for any $K \geq 1$
   \begin{align*}
       \sum_{k > K} \Vert f_k-M_{\phi_n}(f_k) \Vert_2 &\leq (1+M) \sum_{k > K } \Vert f_k \Vert_2 = (1+M) \sum_{k > K} \frac{1}{k} k\Vert f_k \Vert_2 \notag\\
       &\leq (1+M) \bigg( \sum_{k > K} \frac{1}{k^2} \bigg)^{1/2} \cdot \bigg( \sum_{k > K} k^2 \Vert f_k \Vert_2^2 \bigg)^{1/2}\notag  \\
       &\leq (1+M) \bigg( \sum_{k>K} \frac{1}{k^2} \bigg)^{1/2} \cdot L(f).
   \end{align*}
    The necessary estimate for small values of $k$ will come from pointwise convergence of the sequence $(\phi_n)_n$ to $1$. Note that the ball $B_\ell(K+1)$ is finite as $\ell$ is proper. Since $(\phi_n)_n \to 1$ pointwise, we can then for any $\eta >0$ choose $N$ so that for all $n \geq N$ 
    the value $\max_{x\in B_\ell(K+1)}\vert 1 -\phi_n(x) \vert < \eta$. 
    If $G = \{e\}$ the result is immediate, and so we proceed to assume $G \neq \{e\}$. 
    Then, since $\ell$ is proper, $B_\ell(1)$ is finite, and so there is $d_0 \geq 1$ such that
    \begin{align*}
        d_0 \geq (\inf_{x\neq e} \ell(x))^{-1}.
    \end{align*}
   Since $f(e) = 0$, we have $\Vert f \Vert_2 \leq d_0 \cdot L(f)$. Thus
   \begin{align*}
       \sum_{k=0}^K \Vert f_k - M_{\phi_n}(f_k) \Vert_2 &\leq (\max_{x \in B_\ell(K+1)}\vert 1-\phi_n(x) \vert) \sum_{k=0}^K \Vert f_k \Vert_2 \notag \\
       &\leq  (\max_{x \in B_\ell(K+1)}\vert 1-\phi_n(x) \vert) \sqrt{K+1} \bigg( \sum_{k=0}^K \Vert f_k \Vert_2^2 \bigg)^{1/2}      \notag    \\
       &\leq  (\max_{x \in B_\ell(K+1)}\vert 1-\phi_n(x) \vert) \sqrt{K+1} \Vert f \Vert_2 \notag \\
       &\leq  (\max_{x \in B_\ell(K+1)}\vert 1-\phi_n(x) \vert) \cdot d_0 \sqrt{K+1}\cdot L(f) .
   \end{align*}

    Finally, we verify the condition from Proposition \ref{prop:CQMS-equiv-weak-amenable-gps} and show that $(C_c(G), L)$ is a compact quantum metric space. 
   Let $\varepsilon > 0$.  
   Defining
   \begin{align*}
       C_1 = \Vert\check{g_1}\Vert_1 \cdot (1+M) \quad \text{and} \quad C_2 = (2\lceil 2R+2\rceil +1) \cdot \Vert \check{h_1}\Vert_1 \cdot C ,
   \end{align*}
   we calculate
   \begin{align*}
       &\Vert \lambda(f^{(n)}) \Vert \leq \Vert \lambda(f^{(n)}) - Q_{h,R}(\lambda(f^{(n)})) \Vert  + \Vert Q_{h,R} (\lambda(f^{(n)})) \Vert \\
       &\leq \frac{C_1}{R}  L(f) +  C_2  \sum_{k \in \N_0} \Vert (f^{(n)})_k \Vert_2        \\
       &\leq  \frac{C_1}{R}  L(f) + C_2   \big( \sum_{k\leq K} \Vert (f^{(n)})_k \Vert_2 + \sum_{k> K} \Vert (f^{(n)})_k \Vert_2 \big)  \\
       &\leq \frac{C_1}{R}  L(f) + C_2 (\max_{x \in B_\ell(K+1)}\vert 1-\phi_n(x) \vert  d_0 \sqrt{K+1}  L(f) + (1+M) \bigg( \sum_{k>K} \frac{1}{k^2} \bigg)^{1/2} ) L(f) \\
       &= \bigg( \frac{C_1}{R} + C_2 (1+M) \bigg( \sum_{k>K} \frac{1}{k^2} \bigg)^{1/2} + C_2 d_0\sqrt{K+1}\max_{x \in B_\ell(K+1)}\vert 1-\phi_n(x) \vert \bigg) L(f).
   \end{align*}
   Remembering that $L(f) \leq 1$, we now first choose $R$ such that $C_1 R^{-1} < \varepsilon/3$. Then $C_2$ is some fixed finite value, and we choose $K$ so that 
   \begin{align*}
       \bigg(\sum_{k>K} \frac{1}{k^2} \bigg)^{1/2} < \frac{\varepsilon}{3 \cdot C_2 \cdot (1+M)}.
   \end{align*}
   Lastly we choose $N$ so that for all $n \geq N$, 
   \begin{align*}
       \max_{x \in B_\ell(K+1)} \vert 1- \phi_n(x) \vert < \frac{\varepsilon}{3 C_2 \cdot d_0\sqrt{K+1}}.
   \end{align*}
   Thus $\Vert \lambda(f^{(n)}) \Vert < \varepsilon$. Since $f \in E$ was arbitrary, the result follows. 
\end{proof}

\begin{remark}
    The length functions appearing in Theorem \ref{thm:main-theorem} also have the rapid decay property. Indeed, by Lemma \ref{lemma:proper-weak-geodesic-is-fin-gen}, if $\ell$ is proper and weakly geodesic, then there is a finite symmetric generating set $S$ such that $\ell$ and the corresponding word length $\ell_S$ are quasi-isometric. It is known that word hyperbolic groups have rapid decay with respect to word length functions \cite{delaHarpeHyperbolic88, JolissaintRD90}. In other words, there are constants $C,s \geq 0$ such that
    \begin{align*}
        \Vert \lambda(f) \Vert \leq C \cdot \bigg( \sum_{g \in G} \vert f(g) \vert^2 (1+ \ell_S(g) )^{2s} \bigg)^{1/2}
    \end{align*}
    for all $f \in C_c(G)$. From this expression it is easy to see that rapid decay is preserved through quasi-isometry of length functions, and so $\ell$ also has rapid decay.

    However, Theorem \ref{thm:main-theorem} does not follow from this observation, because it is not known whether the compact quantum metric property for the first commutator seminorm \eqref{eq:def-L-ell} is invariant under quasi-isometric changes of length function. 
\end{remark}

\subsection{Examples}\label{subsec:Examples}
In this section we show how to construct compact quantum metric spaces from some explicit examples of length functions on hyperbolic groups. The examples below are far from an exhaustive list of candidates. Any left-invariant metric $d$ on a hyperbolic group $G$ gives rise to a length function. In light of Lemma \ref{lemma:rough-geodesic-lfs-on-hyperbolic-groups}, we deduce that if this metric is hyperbolic and quasi-isometric to a word length function, then weak geodesicity of $d$ and properness of $\ell$ is automatic, and so we fall under the purview of Theorem \ref{thm:main-theorem} and can construct compact quantum metric spaces. We cover two families of examples in particular: length functions arising from random walks on groups, and length functions arising from proper cocompact isometric actions on hyperbolic spaces.  
\subsubsection{Random walks on hyperbolic groups}
We refer the reader to \cite{WoessRandomWalks2000} for standard facts about random walks on groups. 
Let $G$ be a non-elementary hyperbolic group, and suppose $\mu$ is a symmetric and finitely supported probability measure on $G$ with support $\mathrm{supp}(\mu)$, which we further assume generates $G$. The random walk on $G$ with law $\mu$ has state space $G$ and transition probabilities $p_\mu (x,y) = \mu(x^{-1}y)$. Then the $n$-step transition probabilities are $p^{(n)}_\mu (x,y ) = \mu^{*n}(x^{-1}y)$, where $\mu^{*n}$ is the $n$-fold convolution of $\mu$ with itself, and convolution of measures $\mu$ and $\nu$ is given by
\begin{align*}
    \mu * \nu (x) = \sum_{y \in G} \mu(y) \nu(y^{-1}x) =\sum_{y\in G} \mu(xy^{-1}) \nu(y).
\end{align*}
Denote by $F_\mu(x,y)$ the hitting probabilities of the random walk. 
Then 
\begin{align*}
    d_\mu(x,y) := -\log F_\mu(x,y) 
\end{align*}
is a left-invariant and hyperbolic metric which is quasi-isometric to the word metric 
on $G$ \cite[Corollary 1.2]{BlachereHaissinskyMathieu2011}. The metric $d_\mu$  is known as the Green metric associated with $\mu$. We thus have a length function
\begin{align*}
    \ell_\mu (x) = d_\mu(e,x) = -\log F_\mu (e,x).
\end{align*}
 Since $d_{\ell_\mu} = d_\mu$, it now follows from Lemma \ref{lemma:rough-geodesic-lfs-on-hyperbolic-groups} that $\ell_\mu$ induces a weakly geodesic metric on $G$. Moreover, quasi-isometry with the word length function implies $\ell_\mu$ is proper. Thus the following is immediate from Theorem \ref{thm:main-theorem}.

\begin{corollary}\label{cor:CQMS-from-random-walk}
Let $G$ be a non-elementary hyperbolic group, and 
let $\mu$ be a 
finitely supported symmetric probability measure for which $\mathrm{supp}(\mu)$ generates $G$. Denote by $\ell_\mu$ the proper length function associated with the Green metric of $\mu$. Let $L$ be the associated seminorm as in \eqref{eq:def-L-ell}. Then $(C_c(G), L)$ is a compact quantum metric space.
\end{corollary}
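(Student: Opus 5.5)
The plan is to verify the hypotheses of Theorem \ref{thm:main-theorem} for $\ell_\mu$ and then apply it. Since $G$ is assumed hyperbolic (indeed non-elementary hyperbolic), the remaining requirements are: $\ell_\mu$ is a proper length function, $(G,d_{\ell_\mu})$ is hyperbolic, and it is weakly geodesic.

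\textbf{Step 1: $\ell_\mu$ is a length function.} Since $F_\mu(e,e)=1$, we have $\ell_\mu(e)=0$. Symmetry of $\mu$ gives $F_\mu(e,x)=F_\mu(e,x^{-1})$, using left-invariance $F_\mu(x,y)=F_\mu(e,x^{-1}y)$ together with reversal of paths. For the triangle inequality, note $F_\mu(x,z)\geq F_\mu(x,y)F_\mu(y,z)$: a walk can first hit $y$ and then, by the strong Markov property, go on to hit $z$. Taking $-\log$ gives subadditivity.

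Positivity $\ell_\mu(x)>0$ for $x\neq e$ needs $F_\mu(e,x)<1$. This follows because a non-elementary hyperbolic group is non-amenable, so the walk is transient; a recurrence-type argument then shows the walk cannot hit every $x$ with probability one. All of this is contained in the statement from \cite{BlachereHaissinskyMathieu2011} that $d_\mu$ is a genuine left-invariant metric, so I would simply cite it rather than reprove it.

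\textbf{Step 2: hyperbolicity, quasi-isometry and properness.} By \cite[Corollary 1.2]{BlachereHaissinskyMathieu2011}, $d_\mu=d_{\ell_\mu}$ is hyperbolic and quasi-isometric to a word metric $d_S$. Quasi-isometry gives $\ell_\mu(x)\geq A^{-1}\ell_S(x)-B$, so each ball $B_{\ell_\mu}(r)$ lies inside a finite word ball. Hence $\ell_\mu$ is proper.

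\textbf{Step 3: weak geodesicity.} Lemma \ref{lemma:rough-geodesic-lfs-on-hyperbolic-groups} applies directly: $\ell_\mu$ is quasi-isometric to a word length function and $d_{\ell_\mu}$ is hyperbolic, so $d_{\ell_\mu}$ is roughly geodesic. By the computation preceding that lemma, it is therefore weakly geodesic.

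Theorem \ref{thm:main-theorem} now gives that $(C_c(G),L)$ is a compact quantum metric space. The only substantive input is the external result of Blach\`ere--Ha\"issinsky--Mathieu, which is where the non-elementary, symmetric and finite-support hypotheses are used. Everything else is bookkeeping, so I expect no real obstacle beyond checking that the cited corollary matches our hypotheses.
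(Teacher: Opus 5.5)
Your proposal is correct and follows essentially the same route as the paper. You cite \cite[Corollary 1.2]{BlachereHaissinskyMathieu2011} for hyperbolicity of $d_\mu$ and quasi-isometry to a word metric, deduce properness of $\ell_\mu$ from the quasi-isometry and weak geodesicity from Lemma~\ref{lemma:rough-geodesic-lfs-on-hyperbolic-groups}, and then apply Theorem~\ref{thm:main-theorem}. Your extra sketches of the length-function axioms are fine but not needed, since the paper takes them from the cited work just as you ultimately propose; the symmetry $F_\mu(e,x)=F_\mu(e,x^{-1})$ is most cleanly obtained from symmetry of the Green function rather than direct path reversal of first-passage probabilities.
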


\subsubsection{Proper cocompact isometric actions on hyperbolic spaces}
Suppose $(X,d)$ is a proper, geodesic and hyperbolic metric space, and let $G \acts X$ be an  
isometric left action. 
Suppose there is a point $x_0 \in X$ with ${\rm Stab}_G (x_0) = \{e\}$, 
and define a map $\ell \colon G \to [0,\infty)$ through
\begin{align}\label{eq:length-function-from-action}
    \ell(g) = d(x_0 , g x_0)
\end{align}
for $g \in G$. Using the fact that the action is isometric together with ${\rm Stab}_G(x_0) = \{e\}$, it is straightforward to verify that $\ell$ is a length function on $G$. In fact, we then have
\begin{align*}
    d_{\ell}(g,h) = \ell(g^{-1}h) = d(x_0 , g^{-1}hx_0) = d(gx_0, hx_0)
\end{align*}
which implies that $g \mapsto g x_0$ is an isometric embedding of $G$ into $X$ whose image is the orbit $Gx_0$. Since $X$ is assumed hyperbolic, it follows that $Gx_0$ is hyperbolic, and thus $(G, d_\ell)$ is also hyperbolic. 

If we furthermore assume that
\begin{align}\label{eq:action-lf-proper}
    |\{g \in G \mid d(x_0, gx_0) \leq r \} |< \infty \quad \text{ for all $r \geq 0$},
\end{align}
we immediately get that 
$\ell$ is proper. The property \eqref{eq:action-lf-proper} is also implied by $X$ being proper if we additionally assume that the action is a proper action, that is, for each compact $K \subseteq X$ the set $\{g \in G \mid gK \cap K \neq \emptyset \}$ is finite, see \cite[I.8.3]{BridsonHaefligerMetricSpacesBook}. To see this, let $r\geq 0$ be arbitrary and let $K = B(x_0, r)$. Then $K$ is compact by properness of $X$, and 
\begin{align*}
    \{g \in G \mid \ell(g) \leq r \} = \{g \in G \mid d(x_0 , gx_0) \leq r \} \subseteq \{g \in G \mid gK \cap K \neq \emptyset \},
\end{align*}
showing that $\ell$ is a proper length function. 

If we additionally assume that the action is cocompact, it follows from the \v{S}varc--Milnor lemma \cite[Proposition I.8.19]{BridsonHaefligerMetricSpacesBook} that $\ell$ is quasi-isometric to a word length function on $G$. By Lemma \ref{lemma:rough-geodesic-lfs-on-hyperbolic-groups} we deduce that $\ell$ is weakly $c$-geodesic for some $c\geq 0$. We have the following result.

\begin{corollary}
    Suppose $G$ is word hyperbolic, and let $G \acts X$ be a proper cocompact and isometric action on a proper geodesic hyperbolic metric space $(X,d)$. Suppose $x_0 \in X$ satisfies ${\rm Stab}_G(x_0) = \{e\}$, and let $\ell$ be the length function from 
    \eqref{eq:length-function-from-action}. With $L \colon C_c(G) \to [0,\infty)$ as in \eqref{eq:def-L-ell}, the pair $(C_c(G), L)$ is a compact quantum metric space.
\end{corollary}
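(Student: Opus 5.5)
The plan is to reduce this corollary to Theorem \ref{thm:main-theorem}. That means checking three things about $\ell(g) = d(x_0, gx_0)$: it is a proper length function, $(G,d_\ell)$ is hyperbolic, and $(G,d_\ell)$ is weakly geodesic. All three have essentially been set up in the discussion preceding the statement, so the proof consists of assembling those observations in order.

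\textbf{Length function and hyperbolicity.} First I verify that $\ell$ is a length function. Because the action is isometric, $\ell(g^{-1}) = d(x_0, g^{-1}x_0) = d(gx_0, x_0) = \ell(g)$. Subadditivity follows from the triangle inequality together with $d(gx_0, ghx_0) = d(x_0, hx_0)$. Moreover $\ell(g)=0$ forces $gx_0 = x_0$, and hence $g=e$ since ${\rm Stab}_G(x_0)=\{e\}$. The identity $d_\ell(g,h) = d(gx_0,hx_0)$ shows that $g\mapsto gx_0$ is an isometric embedding of $(G,d_\ell)$ onto the orbit $Gx_0\subseteq X$. The four-point condition of Definition \ref{def:hyperbolic-metric-space} passes to subspaces with the same $\delta$, so $(G,d_\ell)$ is $\delta$-hyperbolic.

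\textbf{Properness.} For $r\geq 0$, the closed ball $K=\overline{B}(x_0,r)$ is compact because $X$ is proper. Every $g$ with $\ell(g)\leq r$ satisfies $gx_0\in gK\cap K$. By properness of the action the set $\{g \mid gK\cap K\neq\emptyset\}$ is finite, so $B_\ell(r)$ is finite and $\ell$ is proper.

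\textbf{Weak geodesicity.} Since the action is proper, cocompact and isometric on a proper geodesic space, the \v{S}varc--Milnor lemma shows that $G$ is finitely generated and that the orbit map is a quasi-isometry. Hence $\ell$ is quasi-isometric to a word length $\ell_S$. Combined with hyperbolicity of $d_\ell$, Lemma \ref{lemma:rough-geodesic-lfs-on-hyperbolic-groups} gives that $d_\ell$ is roughly geodesic, and therefore weakly $c$-geodesic for some $c$. The hypotheses of Theorem \ref{thm:main-theorem} then hold, and the conclusion follows.

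The main point requiring care is this last step. One has to make sure that Lemma \ref{lemma:rough-geodesic-lfs-on-hyperbolic-groups} (via \cite{BonkSchramm2000}) applies to a metric that is only quasi-isometric to a word metric and hyperbolic, and not itself geodesic. It does, as stated. The remaining verifications are routine.
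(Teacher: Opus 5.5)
Your proposal is correct and follows the paper's own argument essentially step for step. You show $\ell$ is a length function using the isometric action and ${\rm Stab}_G(x_0)=\{e\}$, get hyperbolicity of $(G,d_\ell)$ from the isometric embedding $g\mapsto gx_0$ onto the orbit $Gx_0$, get properness from compactness of balls in $X$ together with properness of the action, and get weak geodesicity from the \v{S}varc--Milnor lemma combined with Lemma \ref{lemma:rough-geodesic-lfs-on-hyperbolic-groups}, after which Theorem \ref{thm:main-theorem} applies exactly as in the paper.
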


 \vspace{1cm}

 \noindent {\bf Declaration of AI use: } The writing of this paper was assisted by ChatGPT-5.6 Sol, a large language model made by OpenAI. It  suggested a number of changes to the exposition, the primary being a restructuring of the original argument for Lemma \ref{lemma:annular-norm-control}. All AI-produced suggestions have been verified, and where necessary corrected, by the author. 

\printbibliography

@article {AguilarKaad2018,
    AUTHOR = {Aguilar, Konrad and Kaad, Jens},
     TITLE = {The {P}odle\'s{} sphere as a spectral metric space},
   JOURNAL = {J. Geom. Phys.},
  FJOURNAL = {Journal of Geometry and Physics},
    VOLUME = {133},
      YEAR = {2018},
     PAGES = {260--278},
      ISSN = {0393-0440,1879-1662},
   MRCLASS = {58B34 (46L30 46L89 58B32)},
  MRNUMBER = {3850270},
MRREVIEWER = {Ferdinand\ Ngakeu},
}

@misc{AustadGpdCQMS,
Author = {Are Austad},
Title = {Quantum metrics from length functions on étale groupoids},
Year = {2026},
Eprint = {arXiv:2602.20032},
}

@article {AustadKaadKyed2025,
    AUTHOR = {Austad, Are and Kaad, Jens and Kyed, David},
     TITLE = {Quantum metrics on crossed products with groups of polynomial
              growth},
   JOURNAL = {Trans. Amer. Math. Soc.},
  FJOURNAL = {Transactions of the American Mathematical Society},
    VOLUME = {378},
      YEAR = {2025},
    NUMBER = {3},
     PAGES = {1939--1973},
      ISSN = {0002-9947,1088-6850},
   MRCLASS = {58B34 (46L55 47L65)},
  MRNUMBER = {4866354},
}

@article {AustadKyed2026,
    AUTHOR = {Austad, Are and Kyed, David},
     TITLE = {Quantum metrics from length functions on quantum groups},
   JOURNAL = {J. Funct. Anal.},
  FJOURNAL = {Journal of Functional Analysis},
    VOLUME = {290},
      YEAR = {2026},
    NUMBER = {4},
     PAGES = {Paper No. 111256, 32},
      ISSN = {0022-1236,1096-0783},
   MRCLASS = {58B34 (46L89 58B32)},
  MRNUMBER = {4983378},
}

@misc{BellissardMarcolliReihani2010,
Author = {Jean V. Bellissard and Matilde Marcolli and Kamran Reihani},
Title = {Dynamical Systems on Spectral Metric Spaces},
Year = {2010},
Eprint = {arXiv:1008.4617},
}

@article {BhowmickVoigtZacharias2015,
    AUTHOR = {Bhowmick, Jyotishman and Voigt, Christian and Zacharias,
              Joachim},
     TITLE = {Compact quantum metric spaces from quantum groups of rapid
              decay},
   JOURNAL = {J. Noncommut. Geom.},
  FJOURNAL = {Journal of Noncommutative Geometry},
    VOLUME = {9},
      YEAR = {2015},
    NUMBER = {4},
     PAGES = {1175--1200},
      ISSN = {1661-6952,1661-6960},
   MRCLASS = {81R05 (58B34 81R50)},
  MRNUMBER = {3448333},
MRREVIEWER = {Rita\ Fioresi},
}

@article {BlachereHaissinskyMathieu2011,
    AUTHOR = {Blach\`ere, S\'ebastien and Ha\"issinsky, Peter and Mathieu,
              Pierre},
     TITLE = {Harmonic measures versus quasiconformal measures for
              hyperbolic groups},
   JOURNAL = {Ann. Sci. \'Ec. Norm. Sup\'er. (4)},
  FJOURNAL = {Annales Scientifiques de l'\'Ecole Normale Sup\'erieure.
              Quatri\`eme S\'erie},
    VOLUME = {44},
      YEAR = {2011},
    NUMBER = {4},
     PAGES = {683--721},
      ISSN = {0012-9593,1873-2151},
   MRCLASS = {20F67 (31C12 31C45 60B15)},
  MRNUMBER = {2919980},
MRREVIEWER = {Goulnara\ N.\ Arzhantseva},
       DOI = {10.24033/asens.2153},
}

@article {BonkSchramm2000,
    AUTHOR = {Bonk, M. and Schramm, O.},
     TITLE = {Embeddings of {G}romov hyperbolic spaces},
   JOURNAL = {Geom. Funct. Anal.},
  FJOURNAL = {Geometric and Functional Analysis},
    VOLUME = {10},
      YEAR = {2000},
    NUMBER = {2},
     PAGES = {266--306},
      ISSN = {1016-443X,1420-8970},
   MRCLASS = {53C23 (54E40 57M07)},
  MRNUMBER = {1771428},
MRREVIEWER = {Michel\ Coornaert},
       DOI = {10.1007/s000390050009},
}

@book {BridsonHaefligerMetricSpacesBook,
    AUTHOR = {Bridson, Martin R. and Haefliger, Andr\'e},
     TITLE = {Metric spaces of non-positive curvature},
    SERIES = {Grundlehren der mathematischen Wissenschaften [Fundamental
              Principles of Mathematical Sciences]},
    VOLUME = {319},
 PUBLISHER = {Springer-Verlag, Berlin},
      YEAR = {1999},
     PAGES = {xxii+643},
      ISBN = {3-540-64324-9},
   MRCLASS = {53C23 (20F65 53C70 57M07)},
  MRNUMBER = {1744486},
MRREVIEWER = {Athanase\ Papadopoulos},
       DOI = {10.1007/978-3-662-12494-9},
}

@book {BrownOzawa08,
    AUTHOR = {Brown, Nathanial P. and Ozawa, Narutaka},
     TITLE = {{$C^*$}-algebras and finite-dimensional approximations},
    SERIES = {Graduate Studies in Mathematics},
    VOLUME = {88},
 PUBLISHER = {American Mathematical Society, Providence, RI},
      YEAR = {2008},
     PAGES = {xvi+509},
      ISBN = {978-0-8218-4381-9; 0-8218-4381-8},
   MRCLASS = {46L05 (43A07 46-02 46L10)},
  MRNUMBER = {2391387},
MRREVIEWER = {Mikael\ R\o rdam},
       DOI = {10.1090/gsm/088},
}

@article {CantrellTanaka2025,
    AUTHOR = {Cantrell, Stephen and Tanaka, Ryokichi},
     TITLE = {The {M}anhattan curve, ergodic theory of topological flows and
              rigidity},
   JOURNAL = {Geom. Topol.},
  FJOURNAL = {Geometry \& Topology},
    VOLUME = {29},
      YEAR = {2025},
    NUMBER = {4},
     PAGES = {1851--1907},
      ISSN = {1465-3060,1364-0380},
   MRCLASS = {20F67 (37D35 37D40)},
  MRNUMBER = {4929470},
       DOI = {10.2140/gt.2025.29.1851},
}

@article {ChristRieffel2017,
    AUTHOR = {Christ, Michael and Rieffel, Marc A.},
     TITLE = {Nilpotent group {${\rm C}^*$}-algebras as compact quantum
              metric spaces},
   JOURNAL = {Canad. Math. Bull.},
  FJOURNAL = {Canadian Mathematical Bulletin. Bulletin Canadien de
              Math\'ematiques},
    VOLUME = {60},
      YEAR = {2017},
    NUMBER = {1},
     PAGES = {77--94},
      ISSN = {0008-4395,1496-4287},
   MRCLASS = {46L87 (20F65 22D15 53C23 58B34)},
  MRNUMBER = {3612100},
MRREVIEWER = {Anton\ Yu.\ Savin},
       DOI = {10.4153/CMB-2016-040-6},
}

@article {ChristensenIvanRD,
    AUTHOR = {Antonescu, Cristina and Christensen, Erik},
     TITLE = {Metrics on group {$C^*$}-algebras and a non-commutative
              {A}rzel\`a-{A}scoli theorem},
   JOURNAL = {J. Funct. Anal.},
  FJOURNAL = {Journal of Functional Analysis},
    VOLUME = {214},
      YEAR = {2004},
    NUMBER = {2},
     PAGES = {247--259},
      ISSN = {0022-1236,1096-0783},
   MRCLASS = {46L85 (46L07 58B34)},
  MRNUMBER = {2083302},
MRREVIEWER = {Eric Leichtnam},
}

@article {Connes1989,
    AUTHOR = {Connes, A.},
     TITLE = {Compact metric spaces, {F}redholm modules, and
              hyperfiniteness},
   JOURNAL = {Ergodic Theory Dynam. Systems},
  FJOURNAL = {Ergodic Theory and Dynamical Systems},
    VOLUME = {9},
      YEAR = {1989},
    NUMBER = {2},
     PAGES = {207--220},
      ISSN = {0143-3857,1469-4417},
   MRCLASS = {46L80 (19K56 22D25 58B15 58G12)},
  MRNUMBER = {1007407},
MRREVIEWER = {Jeffrey\ Fox},
}

@book {ConnesNCGBook,
    AUTHOR = {Connes, Alain},
     TITLE = {Noncommutative geometry},
 PUBLISHER = {Academic Press Inc., San Diego, CA},
      YEAR = {1994},
     PAGES = {xiv+661},
      ISBN = {0-12-185860-X},
   MRCLASS = {46Lxx (19K56 22D25 58B30 58G12 81T13 81V22 81V70)},
  MRNUMBER = {1303779},
MRREVIEWER = {John\ Roe},
}

@article {delaHarpeHyperbolic88,
    AUTHOR = {de la Harpe, Pierre},
     TITLE = {Groupes hyperboliques, alg\`ebres d'op\'erateurs et un
              th\'eor\`eme de {J}olissaint},
   JOURNAL = {C. R. Acad. Sci. Paris S\'er. I Math.},
  FJOURNAL = {Comptes Rendus des S\'eances de l'Acad\'emie des Sciences.
              S\'erie I. Math\'ematique},
    VOLUME = {307},
      YEAR = {1988},
    NUMBER = {14},
     PAGES = {771--774},
      ISSN = {0249-6291},
   MRCLASS = {22D25 (20F38 22E40 46L80 58G12)},
  MRNUMBER = {972078},
MRREVIEWER = {Alain\ Valette},
}

@article {FarsiLandryLarsenPacker,
    AUTHOR = {Farsi, Carla and Landry, Therese and Larsen, Nadia S. and
              Packer, Judith},
     TITLE = {Spectral triples for noncommutative solenoids and a {W}iener's
              lemma},
   JOURNAL = {J. Noncommut. Geom.},
  FJOURNAL = {Journal of Noncommutative Geometry},
    VOLUME = {18},
      YEAR = {2024},
    NUMBER = {4},
     PAGES = {1415--1452},
      ISSN = {1661-6952,1661-6960},
   MRCLASS = {47B07 (22D15 46L87 58B34)},
  MRNUMBER = {4793458},
MRREVIEWER = {Botao\ Long},
       DOI = {10.4171/jncg/557},
}

@misc{GerontogiannisMesland25,
Author = {Dimitris Michail Gerontogiannis and Bram Mesland},
Title = {Ideal quantum metrics from fractional Laplacians},
Year = {2025},
Eprint = {arXiv:2502.04187},
}

@article {HawkinsSkalskiWhiteZacharias2013,
    AUTHOR = {Hawkins, A. and Skalski, A. and White, S. and Zacharias, J.},
     TITLE = {On spectral triples on crossed products arising from
              equicontinuous actions},
   JOURNAL = {Math. Scand.},
  FJOURNAL = {Mathematica Scandinavica},
    VOLUME = {113},
      YEAR = {2013},
    NUMBER = {2},
     PAGES = {262--291},
      ISSN = {0025-5521,1903-1807},
   MRCLASS = {46L55 (58B34)},
  MRNUMBER = {3145183},
MRREVIEWER = {Charles\ Batty},
}

@article {JolissaintRD90,
    AUTHOR = {Jolissaint, Paul},
     TITLE = {Rapidly decreasing functions in reduced {$C^*$}-algebras of
              groups},
   JOURNAL = {Trans. Amer. Math. Soc.},
  FJOURNAL = {Transactions of the American Mathematical Society},
    VOLUME = {317},
      YEAR = {1990},
    NUMBER = {1},
     PAGES = {167--196},
      ISSN = {0002-9947,1088-6850},
   MRCLASS = {22D25 (43A15 46L99)},
  MRNUMBER = {943303},
MRREVIEWER = {A.\ Derighetti},
       DOI = {10.2307/2001458},
}

@book {KaadKyedSU2,
    AUTHOR = {Kaad, Jens and Kyed, David},
     TITLE = {The quantum metric structure of quantum {$\rm SU(2)$}},
    SERIES = {Memoirs of the European Mathematical Society},
    VOLUME = {18},
 PUBLISHER = {EMS Press, Berlin},
      YEAR = {2025},
     PAGES = {viii+119},
      ISBN = {978-3-98547-091-4; 978-3-98547-591-9},
   MRCLASS = {58B32 (46Lxx 58B34 81R60)},
  MRNUMBER = {4867562},
}

@article {KasparovSkandalis2003,
    AUTHOR = {Kasparov, Gennadi and Skandalis, Georges},
     TITLE = {Groups acting properly on ``bolic'' spaces and the {N}ovikov
              conjecture},
   JOURNAL = {Ann. of Math. (2)},
  FJOURNAL = {Annals of Mathematics. Second Series},
    VOLUME = {158},
      YEAR = {2003},
    NUMBER = {1},
     PAGES = {165--206},
      ISSN = {0003-486X,1939-8980},
   MRCLASS = {58J22 (19K35 19M05 57R65)},
  MRNUMBER = {1998480},
MRREVIEWER = {Tsuyoshi\ Kato},
       DOI = {10.4007/annals.2003.158.165},
}

@article {KlisseCQMS,
    AUTHOR = {Klisse, Mario},
     TITLE = {Crossed products as compact quantum metric spaces},
   JOURNAL = {Canad. J. Math.},
  FJOURNAL = {Canadian Journal of Mathematics. Journal Canadien de
              Math\'ematiques},
    VOLUME = {78},
      YEAR = {2026},
    NUMBER = {1},
     PAGES = {245--275},
      ISSN = {0008-414X,1496-4279},
   MRCLASS = {46L05 (46L87 46L89 47L65 58B34)},
  MRNUMBER = {5020072},
}

@misc{Klisse26,
Author = {Mario Klisse},
Title = {Word-Length Spectral Triples of $(\mathbb{Z}/2\mathbb{Z})\wr\mathbb{F}_{d}$ Are Not Metric},
Year = {2026},
Eprint = {arXiv:2608.12080},
}

@article {OzawaHyperbolicWeakAmenable2008,
    AUTHOR = {Ozawa, Narutaka},
     TITLE = {Weak amenability of hyperbolic groups},
   JOURNAL = {Groups Geom. Dyn.},
  FJOURNAL = {Groups, Geometry, and Dynamics},
    VOLUME = {2},
      YEAR = {2008},
    NUMBER = {2},
     PAGES = {271--280},
      ISSN = {1661-7207,1661-7215},
   MRCLASS = {20F67 (43A65 46L07)},
  MRNUMBER = {2393183},
MRREVIEWER = {Zoran\ \v Suni\'c},
       DOI = {10.4171/GGD/40},
}

@article {OzawaRieffel2005,
    AUTHOR = {Ozawa, Narutaka and Rieffel, Marc A.},
     TITLE = {Hyperbolic group {$C^*$}-algebras and free-product
              {$C^*$}-algebras as compact quantum metric spaces},
   JOURNAL = {Canad. J. Math.},
  FJOURNAL = {Canadian Journal of Mathematics. Journal Canadien de
              Math\'ematiques},
    VOLUME = {57},
      YEAR = {2005},
    NUMBER = {5},
     PAGES = {1056--1079},
      ISSN = {0008-414X,1496-4279},
   MRCLASS = {46L87 (20F67 46L05 58B34)},
  MRNUMBER = {2164594},
MRREVIEWER = {Paul\ Jolissaint},
       DOI = {10.4153/CJM-2005-040-0},
}

@article {RieffelMetricsOnStateSpaces1999,
    AUTHOR = {Rieffel, Marc A.},
     TITLE = {Metrics on state spaces},
   JOURNAL = {Doc. Math.},
  FJOURNAL = {Documenta Mathematica},
    VOLUME = {4},
      YEAR = {1999},
     PAGES = {559--600},
      ISSN = {1431-0635,1431-0643},
   MRCLASS = {46L87 (58B34)},
  MRNUMBER = {1727499},
MRREVIEWER = {Jacek\ Brodzki},
}

@article {Rieffel02,
    AUTHOR = {Rieffel, Marc A.},
     TITLE = {Group {$C^*$}-algebras as compact quantum metric spaces},
   JOURNAL = {Doc. Math.},
  FJOURNAL = {Documenta Mathematica},
    VOLUME = {7},
      YEAR = {2002},
     PAGES = {605--651},
      ISSN = {1431-0635,1431-0643},
   MRCLASS = {22D25 (20F65 20F69 46L55 46L87 58J42)},
  MRNUMBER = {2015055},
MRREVIEWER = {Alain\ Valette},
}

@incollection {RieffelCQMS04,
    AUTHOR = {Rieffel, Marc A.},
     TITLE = {Compact quantum metric spaces},
 BOOKTITLE = {Operator algebras, quantization, and noncommutative geometry},
    SERIES = {Contemp. Math.},
    VOLUME = {365},
     PAGES = {315--330},
 PUBLISHER = {Amer. Math. Soc., Providence, RI},
      YEAR = {2004},
      ISBN = {0-8218-3402-9},
   MRCLASS = {46L87 (53C23 58B34)},
  MRNUMBER = {2106826},
MRREVIEWER = {Vladimir\ Manuilov},
}

@incollection {Rieffel2004MatrixAlgs,
    AUTHOR = {Rieffel, Marc A.},
     TITLE = {Matrix algebras converge to the sphere for quantum
              {G}romov-{H}ausdorff distance},
      NOTE = {Gromov-Hausdorff distance for quantum metric spaces. Matrix
              algebras converge to the sphere for quantum Gromov-Hausdorff
              distance},
   JOURNAL = {Mem. Amer. Math. Soc.},
  FJOURNAL = {Memoirs of the American Mathematical Society},
    VOLUME = {168},
      YEAR = {2004},
    NUMBER = {796},
     PAGES = {67--91},
      ISSN = {0065-9266,1947-6221},
   MRCLASS = {46L87 (53C23 58B34 81R30)},
  MRNUMBER = {2055928},
}

@incollection {Rieffel2004qGH,
    AUTHOR = {Rieffel, Marc A.},
     TITLE = {Gromov-{H}ausdorff distance for quantum metric spaces},
      NOTE = {Appendix 1 by Hanfeng Li,
              Gromov-Hausdorff distance for quantum metric spaces. Matrix
              algebras converge to the sphere for quantum Gromov-Hausdorff
              distance},
   JOURNAL = {Mem. Amer. Math. Soc.},
  FJOURNAL = {Memoirs of the American Mathematical Society},
    VOLUME = {168},
      YEAR = {2004},
    NUMBER = {796},
     PAGES = {1--65},
      ISSN = {0065-9266,1947-6221},
   MRCLASS = {46L87 (53C23 58B34 60B10)},
  MRNUMBER = {2055927},
}

@incollection {RieffelMartricialBridges2016,
    AUTHOR = {Rieffel, Marc A.},
     TITLE = {Matricial bridges for ``matrix algebras converge to the
              sphere''},
 BOOKTITLE = {Operator algebras and their applications},
    SERIES = {Contemp. Math.},
    VOLUME = {671},
     PAGES = {209--233},
 PUBLISHER = {Amer. Math. Soc., Providence, RI},
      YEAR = {2016},
      ISBN = {978-1-4704-1948-6},
   MRCLASS = {46L87 (53C23 58B34 81R10)},
  MRNUMBER = {3546687},
MRREVIEWER = {Jian\ Wang},
}

@book {WoessRandomWalks2000,
    AUTHOR = {Woess, Wolfgang},
     TITLE = {Random walks on infinite graphs and groups},
    SERIES = {Cambridge Tracts in Mathematics},
    VOLUME = {138},
 PUBLISHER = {Cambridge University Press, Cambridge},
      YEAR = {2000},
     PAGES = {xii+334},
      ISBN = {0-521-55292-3},
   MRCLASS = {60B15 (60G50 60J10)},
  MRNUMBER = {1743100},
MRREVIEWER = {Donald\ I.\ Cartwright},
       DOI = {10.1017/CBO9780511470967},
}

\end{document}